\newtheorem{thm}{Theorem}[section]
\newtheorem{cor}[thm]{Corollary}
\newtheorem{lem}[thm]{Lemma}
\theoremstyle{definition}
\newtheorem{defi}[thm]{Definition}
\theoremstyle{remark}
\numberwithin{equation}{section}
\numberwithin{figure}{section}
\def\A{\mathcal{A}}
\def \e{\epsilon}
\def \E{\mathcal{E}}
\def \F{\mathcal{F}}
\def \H{\mathcal{H}}
\def\I{\mathcal{I}}
\def\la{\lambda}
\def \s{\sigma}
\def \T{\mathcal{T}}
\def \Tr{\text{Tr}}
\def \W{\mathbf{W}}
\begin{document}
\title[Trace and Estrada index]
{The trace and Estrada index of uniform hypergraphs with cut vertices}

\author[Y.-Z. Fan]{Yi-Zheng Fan*}
\address{Center for Pure Mathematics, School of Mathematical Sciences, Anhui University, Hefei 230601, P. R. China}
\email{fanyz@ahu.edu.cn}
\thanks{*The corresponding author.
This work was supported by National Natural Science Foundation of China (Grant No. 11871073).}

\author[Y. Yang]{Ya Yang}
\address{School of Mathematical Sciences, Anhui University, Hefei 230601, P. R. China}
\email{yangy@stu.ahu.edu.cn}

\author[C.-M. She]{Chuan-Ming She}
\address{School of Mathematical Sciences, Anhui University, Hefei 230601, P. R. China}
\email{cm-she@stu.ahu.edu.cn}

\author[J. Zheng]{Jian Zheng}
\address{School of Mathematical Sciences, Anhui University, Hefei 230601, P. R. China}
\email{zhengj@stu.ahu.edu.cn}

\author[Y.-M. Song]{Yi-Min Song}
\address{School of Mathematical Sciences, Anhui University, Hefei 230601, P. R. China}
\email{songym@stu.ahu.edu.cn}

\author[H.-X. Yang]{Hong-Xia Yang}
\address{School of Mathematical Sciences, Anhui University, Hefei 230601, P. R. China}
\email{yanghx@stu.ahu.edu.cn}

\subjclass[2000]{Primary 05C65, 15A69, ; Secondary 13P15, 14M99}

\keywords{Hypergraph; trace; Estrada index; adjacency tensor; eigenvalue}

\begin{abstract}
Let $\H$ be an $m$-uniform hypergraph, and let $\A(\H)$ be the adjacency tensor of $\H$ which can be viewed as a system of homogeneous polynomials of degree $m-1$.
Morozov and Shakirov generalized the traces of linear systems to nonlinear homogeneous polynomial systems and obtained  explicit formulas for multidimensional resultants.
Sun, Zhou and Bu introduced the Estrada index of uniform hypergraphs which is closely related to the traces of their adjacency tensors.
In this paper we give formulas for the traces of $\A(\H)$ when $\H$ contains cut vertices, and obtain results on the traces and Estrada index when $\H$ is perturbed under local changes.
We prove that among all hypertrees with fixed number of edges, the hyperpath is the unique one with minimum Estrada index and the hyperstar is the unique one with maximum Estrada index.
\end{abstract}

\maketitle

\section{Introduction}
The traces of a square matrix are closely related to its characteristic polynomial by Newton identities.
A very famous identity
\begin{equation}\label{log}
\log \det(I-f)=\Tr\log(I-f)
\end{equation}
 gives the expression of determinant of $f$ by Schur polynomial in terms of the traces (see \cite{MS2011}), where $f: \mathbb{C}^n \to \mathbb{C}^n$ is a linear map, and $I$ is the identity map.
Morozov and Shakirov \cite{MS2011} genearlized the identity (\ref{log}) from determinants to resultants by introducing the traces of polynomial maps $f$ given by $n$ homogeneous polynomials of arbitrary degrees.

A \emph{tensor} (also called \emph{hypermatrix}) $\T=(t_{i_{1} i_2 \ldots i_{m}})$ of order $m$ and dimension $n$ over $\mathbb{C}$ refers to a
 multiarray of entries $t_{i_{1}i_2\ldots i_{m}}\in \mathbb{C}$ for all $i_{j}\in [n]:=\{1,2,\ldots,n\}$ and $j\in [m]$, which can be viewed to be the coordinates of the classical tensor (as a multilinear function) under a certain basis.
The tensor $\T$ is naturally associated with a (nonlinear) polynomial map $\varphi: \mathbb{C}^n \to \mathbb{C}^n$ given by
$$ \varphi_i(x_1,x_2,\ldots,x_n)=\sum_{i_2,\ldots,i_m}  t_{i i_2 \ldots i_{m}}x_{i_2}\cdots x_{i_m}, i \in [n].$$
Let $A=(a_{ij})$ be an $n \times n$ auxiliary matrix by thinking all $a_{ij}$'s as variables.
Using the traces defined by Morozov and Shakirov \cite{MS2011}, the $d$-th trace $\Tr_d(\T)$ of $\T$ (or $\Tr_d(\varphi)$)  is expressed as follow:
\begin{equation}\label{MSeq}\Tr_d(\T)=(m-1)^{(n-1)} \sum_{d_1+\cdots+d_n=d, \atop d_i \in \mathbb{N}, i \in [n]}
\prod_{i=1}^n \frac{1}{(d_i(m-1))!} \left(\sum_{y_i \in [n]^{m-1}}t_{iy_i}\frac{\partial }{\partial a_{iy_i}}\right)
\Tr(A^{d(m-1)}),
\end{equation}
where $t_{iy_i}=t_{ii_2 \ldots i_m}$ and $\frac{\partial }{\partial a_{iy_i}}=\frac{\partial }{\partial a_{ii_2}} \cdots \frac{\partial }{\partial a_{ii_m}}$ if
$y_i=(i_2 \ldots i_m)$.

Let $G$ be a simple graph and let $\A(G)$ be the adjacency matrix of $G$.
In this situation, the $d$-th trace of $\A(G)$ has a graph interpretation, namely,
$\Tr_d(\A(G))$ is the number of closed walks of length $d$ in the graph $G$.
Let $\H$ be an $m$-uniform hypergraph and let $\A(\H)$ be the adjacency tensor of $\H$.
Cooper and Dulte \cite{CD2012} applied Morozov-Shakirov's trace formula (\ref{MSeq}) to give some low co-degree coefficients of the characteristic polynomial of $\A(\H)$.
Shao, Qi and Hu \cite{SQH2015} gave a graph interpretion for the $d$-th trace of $\A(\H)$.
Recently Clark and Cooper \cite{CC2021} generalized the Harary-Sachs theorem of graphs to uniform hypergraphs by using Veblen hypergraphs.
Sun-Zhou-Bu \cite{SZB2021} generalized the Estrada index of graphs to uniform hypergraphs, which is closely related to the traces of the adjacency tensor.

\begin{defi}[\cite{SZB2021}]\label{defE}
Let $\H$ be an $m$-uniform hypergraph on $n$ vertices, and let $\la_1,\ldots,\la_N$ be all eigenvalues of the adjacency tensor $\A(\H)$ of $\H$, where $N=n(m-1)^{n-1}$.
The Estrada index of $\H$ is defined to be
\begin{equation}\label{EEgraph}
EE(G)=\sum_{i=1}^N e^{\lambda_i}.
\end{equation}
\end{defi}
As $\Tr_d(\A(\H))=\sum_{i=1}^N \la_i^d$, it is easily seen that
\begin{equation}\label{EE_tr} EE(G)=\sum_{d=0}^\infty \frac{\Tr_d(\A(G))}{d!}.\end{equation}
When $m=2$, the Estada index in Definition \ref{defE} is exactly that of a graph, 
which was first introduced by Estrada \cite{E2000} in 2000, which was found useful in measuring the degree of protein folding \cite{E2002} and the centrality of complex networks \cite{ER2005}.
Pe\~na, Gutman and Rada \cite{PGR2007} conjectured that
 the path is the unique graph with the minimum Estrada index among all graphs (trees) with given order, and the star is the unique one with the maximum Estrada index among all trees with given order.
 The conjecture was partly proved by Das and Lee \cite{DL2009}, and completely proved by Deng \cite{Deng2009}.
 Ili\'c and Stevanovi\'c \cite{IS2010} gave a new proof of the conjecture by considering the trees with fixed maximum degree.
 One can refer \cite{GDR2011} for a survey on Estrada index of graphs.
 We note here Lu et al. \cite{LXZ} introduced the Estrada index of uniform hypergraphs by the signless Laplacian matrix of the hypergraphs.

In this paper, we first give some perturbation results on the traces of a uniform hypergraph with cut vertices when the hypergraph is under some local changes, and then apply the results to show that among all hypertrees with fixed number of edges, the hyperpath is the unique one with minimum Estrada index and the hyperstar is the unique one with maximum Estrada index, which generalized the result of Pe\~na-Gutman-Rada's conjecture on simple graphs.

\section{Preliminaries}

\subsection{Tensors and hypergraphs}
Let $\T=(t_{i_{1} i_2 \ldots i_{m}})$ be a complex tensor of order $m$ and dimension $n$.
The tensor $\T$ is \emph{symmetric} if all entries $t_{i_1i_2\cdots i_m}$ are invariant under any permutation of its indices.
Given a vector $x\in \mathbb{C}^{n}$, $\T x^{m-1} \in \mathbb{C}^n$, which is defined as follows:
   \[
      (\T x^{m-1})_i =\sum_{i_{2},\ldots,i_{m}\in [n]}t_{ii_{2}\ldots i_{m}}x_{i_{2}}\cdots x_{i_m}, i \in [n].
  \]
 Let $\mathcal{I}=(i_{i_1i_2\ldots i_m})$ be the {\it identity tensor} of order $m$ and dimension $n$, that is, $i_{i_{1}i_2 \ldots i_{m}}=1$ if
   $i_{1}=i_2=\cdots=i_{m} \in [n]$ and $i_{i_{1}i_2 \ldots i_{m}}=0$ otherwise.
In 2005 Lim \cite{Lim} and Qi \cite{Qi} introduced the eigenvalues of tensors independently as follows.

\begin{defi}[\cite{Lim,Qi}]\label{eigen} Let $\T$ be an $m$-th order $n$-dimensional tensor.
For some $\lambda \in \mathbb{C}$, if the polynomial system $(\lambda \mathcal{I}-\T)x^{m-1}=0$, or equivalently $\T x^{m-1}=\lambda x^{[m-1]}$, has a solution $x\in \mathbb{C}^{n}\backslash \{0\}$,
then $\lambda $ is called an \emph{eigenvalue} of $\T$ and $x$ is an \emph{eigenvector} of $\T$ associated with $\lambda$,
where $x^{[m-1]}:=(x_1^{m-1}, x_2^{m-1},\ldots,x_n^{m-1})$.
\end{defi}

The \emph{determinant} of $\T$, denoted by $\det \T$, is defined to be the resultant of the polynomials $\T x^{m-1}$ \cite{Ha},
and the \emph{characteristic polynomial} of $\T$ is defined to be $\varphi_\T(\la):=\det(\la \I-\T)$ \cite{Qi,CPZ2}.
It is known that $\la$ is an eigenvalue of $\T$ if and only if it is a root of $\varphi_\T(\la)$.
The \emph{spectrum} of $\T$ is the multi-set of the roots of $\varphi_\T(\la)$.

A  \emph{hypergraph} $\H=(V,E)$ consists of a vertex set $V=\{v_1,v_2,{\cdots},v_n\}$ denoted by $V(\H)$ and an edge set $E=\{e_1,e_2,{\cdots},e_k\}$ denoted by $E(\H)$,
 where $e_i \subseteq V$ for $i \in [k]$.
 If $|e_i|=m$ for each $i \in [k]$ and $m \geq2$, then $\H$ is called an \emph{$m$-uniform} hypergraph.
 The \emph{degree} of a vertex $v$ in $\H$ is the number of edges of $\H$ containing the vertex $v$.
A vertex $v$ of $\H$ is called a \emph{cored vertex} if it has degree one.
An edge $e$ of $\H$ is called a \emph{pendent edge} if it contains $|e|-1$ cored vertices.
A {\it walk} $W$ in $\H$ is a sequence of alternate vertices and edges: $v_{0}e_{1}v_{1}e_{2}\cdots e_{l}v_{l}$,
    where $v_{i} \ne v_{i+1}$ and $\{v_{i},v_{i+1}\}\subseteq e_{i}$ for $i=0,1,\ldots,l-1$.
In the case of $\H$ being a simple graph, we simply write $W: v_0 v_1 \cdots v_l$ as each edge contains exactly two vertices.
The walk $W$ is called a {\it path} if no vertices or edges are repeated in the sequence.
If $v_0=v_l$, then $W$ is called a {\it circuit}, and is called a {\it cycle} if no vertices or edges are repeated except $v_0=v_l$.
The  hypergraph $\H$ is said to be {\it connected} if every two vertices are connected by a walk.
  The hypergraph $\H$ is called \emph{simple} if there exists no $i \ne j$ such that $e_i \subseteq e_j$.
In particular, a simple graph is a simple $2$-uniform hypergraph.
Throughout of this paper, \emph{all hypergraphs are considered connected, simple and $m$-uniform unless stated somewhere}.

Hu, Qi and Shao \cite{HQS} introduced a class of hypergraphs which are constructed from simple graphs.
Let $G=(V,E)$ be a simple graph. For any $m\geq3$, the {\it $m$-th power of $G$}, denoted by $G^{m}:=(V^{m},E^{m})$,
  is defined as the $m$-uniform hypergraph with the vertex set $V^{m}=V\cup{\{i_{e,1},\ldots,i_{e,m-2}: e\in E}\}$ and edge set
$E^{m}={\{e\cup{{\{i_{e,1},\ldots,i_{e,m-2}}}\}: e\in E}\}$.
 If a hypergraph is connected and acyclic, then it is called a {\it hypertree} (also called supertree in \cite{LSQ} and other literatures).
 Obviously, the power of trees is a special class of hypertrees.
 Denote $P_n$ and $S_n$ the path and cycle with $n$ edges respectively, both as simple graphs.
 The powers $P_n^m$ and $S_n^m$ are called \emph{hyperpath} and \emph{hyperstar} respectively.
 The notion of power of simple graphs were generalized by Khan and Fan \cite{KFan} and Kang et al. \cite{KLQY}.

In 2012 Cooper and Dutle \cite{CD2012} introduced the adjacency tensor of a uniform hypergraph, and applied the eigenvalues of the tensor to characterize the structural property of the hypergraph.

\begin{defi}[\cite{CD2012}]\label{def-adj}
Let $\H$ be an $m$-uniform hypergraph on $n$ vertices $v_1,v_2,\ldots,v_n$.
The \emph{adjacency tensor} of $\H$ is defined as $\mathcal{A}(\H)=(a_{i_{1}i_{2}\ldots i_{m}})$, an $m$-th order $n$-dimensional tensor, where
$$a_{i_1 i_2 \ldots i_m}=\left\{
\begin{array}{cl}
\frac{1}{(m-1)!}, & \mbox{~if~} \{v_{i_1},\ldots,v_{i_m}\} \in E(H);\\
0, & \mbox{~else}.
\end{array}\right.
$$
\end{defi}

Note that the adjacency tensor $\A(\H)$ is symmetric.
The spectrum and the traces of $\H$ are referring to those of $\A(\H)$.
Since the Perron-Frobenius theorem of nonnegative matrices was generalized to  nonnegative tensors \cite{CPZ1,FGH,YY1,YY2,YY3},
 the spectral hypergraph theory develops rapidly on many topics, such as the spectral radius \cite{BL,FanTPL, LSQ, LM}, the eigenvariety associated with the spectral radius \cite{FBH,FTL}, the spectral symmetry \cite{ SQH2015,zhou, FHB,FLW}, the relation between the eigenvalues and the roots of matching polynomial \cite{ZKSB}.

\subsection{Traces}
Let $\T=(t_{i_1\ldots i_m})$ be a tensor of order $m$ and dimension $n$.
Morozov and Shakirov defined the traces $\Tr_d(\T)$ as in (\ref{MSeq}).
Shao-Qi-Hu \cite{SQH2015} proved that
$$ \Tr_d(\T)=\sum_{i=1}^N \la_i^d,$$
which is consistent with the matrix case,
where $\la_1,\ldots,\la_N$ are all eigenvalues of $\T$, and $N=n(m-1)^{n-1}$.

%


Shao, Qi and Hu \cite{SQH2015} gave a graph interpretation for the $d$-th trace $\Tr_d(\T)$.
Let
$$\F_d=\{((i_1,\alpha_1),\ldots,(i_d,\alpha_d)): i_1 \le \cdots \le i_d, \alpha_j \in [n]^{m-1}\},$$
 where $i_j$ is called the \emph{primary index} (or \emph{root}) of the $m$-tuple
$f_j:=(i_j,\alpha_j)$ for $j \in [d]$.
Define an $i_j$-rooted directed star for the tuple $f_j$:
$$ S_{f_j}(i_j)=(V_j, \{(i_j,u_k): k=1,\ldots,m-1 \}),$$
where $V_j=\{i_j,u_1,\ldots,u_{m-1}\}$ as a set by omitting multiple indices if $\alpha_j=(u_1,\ldots,u_{m-1})$.
So we get a multi-directed graph associated with $F$, denoted and defined as
$$R(F)=\cup_{j=1}^d S_{f_j}(i_j).$$
Denote by $b(F)$ the product of the factorial of the multiplicities of all arcs of $R(F)$, $c(F)$ the product of the factorial of the outdegree of the all vertices of $R(F)$, and
$\W(F)$ the set of vertex sequences of all Euler tours of $R(F)$.
 Note that an \emph{Euler tour} of $R(F)$ means a sequence of alternative vertices and arcs
 \begin{equation}\label{euler}
 v_1 e_1 v_2 e_2 \cdots v_{k-1} e_{k-1} v_k
 \end{equation}
 such that it traverses each arc of $R(F)$ exactly once, where $v_k=v_1$ and $e_i =(v_i, v_{i+1})$ for $i \in [k-1]$.
  The \emph{vertex sequence of the Euler tour} (\ref{euler}) is obtained by deleting the arcs, namely, $v_1 v_2 \cdots v_k$.
  An \emph{Euler circuit} is an Euler tour up to cyclic permutation of its arcs, i.e., an Euler tour with no distinguished beginning arc.
Shao, Qi and Hu \cite{SQH2015} proved that
\begin{equation}\label{Shaoeq}\Tr_d(\T)=(m-1)^{(n-1)} \sum_{F \in \F_d} \frac{b(F)}{c(F)} \pi_F(\T) |\W(F)|,
\end{equation}
where $\pi_F(\T)=\prod_{i=1}^d t_{i_j,\alpha_j}$ if $F=((i_1,\alpha_1),\ldots,(i_d,\alpha_d))$.



Let $\H$ be an $m$-uniform hypergraph on $n$ vertices and let $\A(\H)$ be the adjacency tensor of $\H$.
We simply write $\Tr_d(\A(\H))$ as $\Tr_d(\H)$.
From Eq. (\ref{Shaoeq}), it suffices to consider the case  that each tuple in $F \in \F_d$ corresponds a nonzero entry of $\T$; otherwise $\pi_F(\T)$ is zero and has no contribution to the sum.
So, given an ordering of the vertices of $\H$, we only need to consider the following set instead of $\F_d$:
$$\F_d(\H):=\{(e_1(v_1),\ldots,e_d(v_d)): e_i \in E(\H), v_1 \le \cdots \le v_d\},$$
where $e_i(v_i)$ is an edge of $\H$ with root $v_i$.
Similarly we define rooted directed stars $S_{e_i}(v_i)$ and multi-directed graph
$R(F)=\cup_{i=1}^d S_{e_i}(v_i)$.
Furthermore, the directed graph $R(F)$ should contain Euler tours otherwise $|\W(F)|=0$.
So, it is enough to consider the following set
$$ \F^\e_d(\H):=\{F \in \F_d(\H): R(F) ~is ~Eulerian\}.$$
For each $F \in \F^\e_d(\H)$, the multi-hypergraph induced by the edges in $F$ by omitting the roots, denoted by $\mathcal{V}(F)$, is proved to be an $m$-valent $m$-uniform multi-hypergraph which is  called a \emph{Veblen hypergraph} \cite{CC2021}, where a hypergraph is called \emph{$m$-valent} if the degree of each vertex is a multiple of $m$.
Given a Veblen hypergraph $H$, a \emph{rooting} of $H$ is an ordering $F=(e_1(v_1),\ldots,e_t(v_t))$ of all edges of $H$, where $v_i$ is the root of $e_i$ for $i \in [t]$, and $v_1 \le \cdots \le v_t$ under the given order of the vertices of $H$.
If $R(F)$ is Eulerian, such rooting $F$ is called an \emph{Euler rooting} of $H$.

Now substituting $\T$ with the adjacency tensor $\A(\H)$ in (\ref{Shaoeq}), and noting that each $F \in \F_d^\e(\H)$ corresponds to $((m-1)!)^d$ ordered $d$-tuples in $\F_d$, we have
\begin{equation}\label{Shaoeq_Hyp1}\Tr_d(\H)=(m-1)^{(n-1)} \sum_{F \in \F_d^\e(\H)} \frac{b(F)}{c(F)} |\W(F)|,
\end{equation}
Denote by $\E(F)$ the set of Euler tours in $R(F)$,
and $\mathfrak{E}(F)$ the set of Euler circuits in $R(F)$.
Obviously,
\begin{equation}\label{tour} b(F)|\W(F)|=|\E(F)|=d(m-1)|\mathfrak{E}(F)|. 
\end{equation}
%
%
%
%

By BEST theorem \cite{TS1941, EB1987}, for a directed Euler graph $D$, fixed the beginning vertex say $u$,
\begin{equation}\label{cir}
|\mathfrak{E}(D)|=\tau_u(D) \prod_{v \in V(D)} (d^+(v)-1)!,
\end{equation}
where $\tau_u(D)$ is the number of arborescences of $D$ with root $u$ (namely, a directed
$u$-rooted spanning tree such that all vertices except $u$ has a directed path from itself to $u$), $d^+(v)$ denotes the outdegree of the vertex $v$.
For a general directed (not necessarily Euler) graph, the number $\tau_u(D)$ is the principal minor the Laplacian matrix  $L(D)$ of $D$ by deleting the row and column indexed by $u$, where
$$ L(D):=\text{diag}\{d^+(v): v \in V(D)\}-A(D),$$
and $A(D)$ is the adjacency matrix of $D$.
If $D$ is Eulerian,
then $ \tau_u(D)$ is independent of the choice of the root $u$, simply denoted by $\tau(D)$.
In particular, if denoting
$\overleftrightarrow{K_m}$ the directed graph obtained from the complete graph $K_m$ on $m$ vertices by replacing each edge by a pair of arcs with opposite directions, $t \overleftrightarrow{K_m}$ the $t$ copies of $\overleftrightarrow{K_m}$,
then
\begin{equation}\label{spantree}
\tau(t \overleftrightarrow{K_m})=t^{m-1}m^{m-2}.
\end{equation}

Combing Eqs. (\ref{tour}) and (\ref{cir}), we have
\begin{equation}\label{Shaoeq_Hyp3}\Tr_d(\H)=d(m-1)^{n} \sum_{F \in \F_d^\e(\H)} \frac{\tau(F)}{\prod_{v \in V(F)} d_F^+(v)}=:\sum_{F \in \F_d^\e(\H)} \omega(F),
\end{equation}
where $\tau(F):=\tau(R(F))$, $V(F):=V(R(F))$, $d_F^+(v)=(m-1)r_F(v)$ (outdegree of $u$ in $R(F)$), and $r_F(v)$ denotes the number of edges in $F$ with $v$ as the root.

For the convenience of discussion in next section, we also introduce some notions below.
For nonneagtive integers $i,j$, denote
$$ \F^\e_{d}(\H)[u_1,\ldots,u_i,\hat{v}_1,\ldots,\hat{v}_j]:=\{F \in \F_d^\e(\H): r_F(u_p)>0, r_F(v_q)=0, p \in [i], q \in [j]\},$$
$$Tr_d(\H;[u_1,\ldots,u_i,\hat{v}_1,\ldots,\hat{v}_j]):=\sum_{F \in \F^\e_{d}(\H)[u_1,\ldots,u_i,\hat{v}_1,\ldots,\hat{v}_j]} \omega(F).$$
For a positive integer $t \le d$, denote
$$\F^\e_{d;t}(\H)[u]:=\{F \in \F_d^\e(\H)[u]: r_F(u)=t\},
\Tr_{d;t}(\H;[u]):=\sum_{F \in \F_{d;t}^\e(\H)[u]} \omega(F).$$
Furthermore, denote
$$\F^\e_{d;t}(\H)[u;u_1,\ldots,u_i,\hat{v}_1,\ldots,\hat{v}_j]:=
\F^\e_{d;t}(\H)[u] \cap \F^\e_{d}(\H)[u,u_1,\ldots,u_i,\hat{v}_1,\ldots,\hat{v}_j],$$
$$Tr_{d;t}(\H;[u;u_1,\ldots,u_i,\hat{v}_1,\ldots,\hat{v}_j]):=\sum_{F \in \F^\e_{d;t}(\H)[u;u_1,\ldots,u_i,\hat{v}_1,\ldots,\hat{v}_j]} \omega(F).$$

\section{Perturbation of traces of uniform hypergraphs}
In this section, we mainly discuss the trace property of uniform hypergraphs with cut vertices.
A hypergraph is called \emph{nontrival} if it contains more than one vertex.
The \emph{coalescence} of two nontrivial connected hypergraphs $\H_1$ and $\H_2$ is obtained by identifying one vertex $v_1$ of $\H_1$ and one vertex $v_2$ of $\H_2$ and forming a new vertex $u$, denoted by $\H_1(v_1) \odot  \H_2(v_2)$, also written as $\H_1(u) \odot  \H_2(u)$.
Let $\H$ be a connected hypergraph.
A vertex $u \in V(\H)$ is called a cut vertex of $\H$   if $\H$ can be wrriten as $\H_1(u) \odot  \H_2(u)$, where  $\H_1,\H_2$ are both nontrivial and connected.
All hypergraphs are nontrivial, connected and $m$-uniform in this and the following sections.

Define $\frac{0}{0}:=1$ and $\Tr_{0;0}(\H):=(m-1)^{n-1}$ if $\H$ is an $m$-uniform hypergraph on $n$ vertices.

\begin{lem} \label{Tra_cut}
Let $\H=\H_1(u) \odot  \H_2(u)$ be a coalescence of $\H_1,\H_2$. Then
\begin{equation}\label{trace_cut}
    \Tr_d(\H_1(u) \odot  \H_2(u);[u])= \sum_{d_1+d_2=d}\sum_{t_1\le d_1, t_2\le d_2,\atop 0< t_1+t_2 } \frac{d}{t_1+t_2}{t_1+t_2 \choose t_1} \frac{t_1}{d_1}\Tr_{d_1;t_1}(\H_1;[u])  \frac{t_2}{d_2} \Tr_{d_2;t_2}(\H_2;[u]).
\end{equation}
\end{lem}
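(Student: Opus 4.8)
The plan is to expand $\Tr_d(\H;[u])=\sum_{F\in\F_d^\e(\H)[u]}\omega(F)$ using the weight $\omega(F)=d(m-1)^n\tau(F)/\prod_{v\in V(F)}d_F^+(v)$ from (\ref{Shaoeq_Hyp3}), and to group the rootings $F$ according to how their edges distribute between $\H_1$ and $\H_2$. Since $u$ is a cut vertex, every edge of $\H$ lies in exactly one of $\H_1,\H_2$, so $F$ splits canonically as $F=F_1\sqcup F_2$, where $F_i$ is the subsequence of edges of $F$ lying in $\H_i$ (with their roots and induced ordering). Write $d_i=|F_i|$ and $t_i=r_{F_i}(u)$, so that $d_1+d_2=d$, $t_1+t_2=r_F(u)>0$, and $t_i\le d_i$.

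The first step is structural. Every vertex other than $u$ lies only in edges of one side, so its in- and out-degrees in $R(F)$ agree with those in the relevant $R(F_i)$; since $R(F)$ is balanced, each $R(F_i)$ is balanced at every vertex $\ne u$, and then $\sum_v d_{F_i}^+(v)=(m-1)d_i=\sum_v d_{F_i}^-(v)$ forces $R(F_i)$ to be balanced at $u$ as well. A balanced digraph has positive out-degree at each vertex of its support, so every vertex of $\mathcal{V}(F_i)$ is a root of $F_i$; combined with the facts that $R(F)$ is weakly connected (being Eulerian) and that $R(F_1),R(F_2)$ meet only at $u$, this gives: if $F_1,F_2$ are both nonempty then each $R(F_i)$ is connected and balanced, hence Eulerian, contains $u$, and has $t_i\ge1$, i.e. $F_i\in\F_{d_i;t_i}^\e(\H_i)[u]$. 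If instead $F_2=\varnothing$ (resp. $F_1=\varnothing$), all edges of $F$ lie in $\H_1$ (resp. $\H_2$), and passing from $\H$ to $\H_i$ only shrinks the ambient dimension, so $\omega(F)=(m-1)^{n_2-1}\omega_{\H_1}(F)$ (resp. $(m-1)^{n_1-1}\omega_{\H_2}(F)$); these strata contribute $(m-1)^{n_2-1}\Tr_d(\H_1;[u])$ and $(m-1)^{n_1-1}\Tr_d(\H_2;[u])$, which is precisely what the $d_1=0$ and $d_2=0$ terms of (\ref{trace_cut}) produce under the conventions $\tfrac00:=1$ and $\Tr_{0;0}(\H_i;[u]):=(m-1)^{n_i-1}$.

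For the main stratum ($d_i,t_i\ge1$) I would set up a bijection. Given $F_1\in\F_{d_1;t_1}^\e(\H_1)[u]$ and $F_2\in\F_{d_2;t_2}^\e(\H_2)[u]$, every root-sorted merge of the sequences $F_1$ and $F_2$ is an element of $\F_d^\e(\H)[u]$ whose $\H_i$-subsequence is $F_i$; since $u$ is the only root value common to $F_1$ and $F_2$, the number of such merges equals the number of interleavings of the two blocks of $u$-rooted edges, namely $\binom{t_1+t_2}{t_1}$, and all of them give the same multiset of rooted edges, hence the same $R(F)$ and the same $\omega(F)$. It remains to compute that common weight: arborescences at $u$ of the wedge $R(F)=R(F_1)\cup R(F_2)$ correspond bijectively to pairs of arborescences at $u$ of $R(F_1)$ and $R(F_2)$, so $\tau(F)=\tau(F_1)\tau(F_2)$ (using Eulerianness so that $\tau$ is root-independent); the out-degree product splits off $d_F^+(u)=(m-1)(t_1+t_2)$ from $\prod_{v\ne u}d_{F_1}^+(v)\cdot\prod_{v\ne u}d_{F_2}^+(v)$; and with $n=n_1+n_2-1$ all powers of $m-1$ cancel, leaving
\[
\omega(F)=\frac{d}{t_1+t_2}\cdot\frac{t_1}{d_1}\cdot\frac{t_2}{d_2}\,\omega_{\H_1}(F_1)\,\omega_{\H_2}(F_2).
\]
Summing over $(F_1,F_2)$ with multiplicity $\binom{t_1+t_2}{t_1}$ and then over all admissible $(d_1,d_2,t_1,t_2)$ yields (\ref{trace_cut}). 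The main obstacle I anticipate is not a hard estimate but the bookkeeping: checking that the degenerate strata (some $F_i$ empty, $t_i=0$ with $d_i\ge1$, or $d=0$) are each counted exactly once and with the stated conventions, and, on the structural side, nailing down that balancedness of $R(F)$ descends to each $R(F_i)$ — this is what makes the factorization $\tau(F)=\tau(F_1)\tau(F_2)$ and the split of the degree product legitimate.
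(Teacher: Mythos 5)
Your proposal is correct and follows essentially the same route as the paper's proof: decompose $\F^\e_d(\H)[u]$ according to $(d_1,d_2,t_1,t_2)$, observe that the restriction map to $\F^\e_{d_1;t_1}(\H_1)[u]\times\F^\e_{d_2;t_2}(\H_2)[u]$ has fibers of size ${t_1+t_2 \choose t_1}$ with constant weight, factor $\tau(F)$ and the out-degree product at the cut vertex $u$, and absorb the degenerate strata via the conventions $\frac{0}{0}=1$ and $\Tr_{0;0}(\H_i;[u])=(m-1)^{n_i-1}$. Your explicit verification that balancedness and connectivity descend to each $R(F_i)$ (so the restrictions are genuinely Euler rootings) is a point the paper leaves implicit, but it is the same argument.
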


\begin{proof}
For each $F \in \F^\e_d(\H)[u]$, as $u$ is a cut vertex, each edge in $F$ is either contained in $\H_1$ or $\H_2$ but not both.
So, $F|_{\H_1}$, the restriction of $F$ onto $\H_1$, is an ordered rooted edges according their orders in $F$.
Similarly we have $F|_{\H_2}$, the restriction of $F$ onto $\H_2$.
Let
\begin{equation}\label{Fdd}
\F^\e_{d_1,d_2;t_1,t_2}(\H)[u]:=\{F \in \F^\e_d(\H)[u]: |F|_{\H_1}|=d_1, |F|_{\H_2}|=d_2, r_{F|_{\H_1}}(u)=t_1, r_{F|_{\H_2}}(u)=t_2,t_1+t_2>0\},
\end{equation}
where $|F|$ denotes the number of edges in $F$.
So we  have a decomposition of $ \F^\e_d(\H)[u]$ as follows:
$$ \F^\e_d(\H)[u]=\bigcup_{d_1+d_2=d}\bigcup_{t_1\le d_1,t_2\le d_2, \atop 0 < t_1+t_2 }\F^\e_{d_1,d_2;t_1,t_2}(\H)[u].$$
Note that there is a surjection from
$$\phi: \F^\e_{d_1,d_2;t_1,t_2}(\H)[u] \to \F^\e_{d_1;t_1}(\H_1)[u] \times \F^\e_{d_2;t_2}(\H_2)[u], ~ F \mapsto (F|_{\H_1},F|_{\H_2}),$$
and each pair $(F_1,F_2) \in \F^\e_{d_1;t_1}(\H_1)[u] \times \F^\e_{d_2;t_2}(\H_2)[u]$ has $t_1+t_2 \choose t_1$ preimages in   $\F^\e_{d_1,d_2;t_1,t_2}(\H)[u]$ as in each of the preimages $u$ is the root of $t_1$ edges in $\H_1$ and $t_2$ edges in $\H_2$.


Suppose $\H$ has $n$ vertices and $\H_1,\H_2$ have $n_1,n_2$ vertices respectively.
Then $n=n_1+n_2-1$.
By Eq. (\ref{Shaoeq_Hyp3}),
\begin{equation}\label{cut_tra_new}
\begin{split}
 \Tr_d(\H;[u])&=\sum_{F \in \F^\e_d(\H)[u]} \frac{d(m-1)^{n}\tau(F)}{\prod_{v \in V(F)} d_F^+(v)}\\
    &= \sum_{d_1+d_2=d}\sum_{t_1\le d_1, t_2\le d_2, \atop 0<t_1+t_2} \sum\limits_{F \in \F^\e_{d_1,d_2;t_1,t_2}(\H)[u]}
    \frac{d(m-1)^{n}}{(t_1+t_2)(m-1)} \prod_{i \in S_F}\frac{t_i (m-1)\tau(F|_{\H_i}) )}{\prod\limits_{v \in V(F|_{\H_i})} d_F^+(v)}\\
    & =\sum_{d_1+d_2=d}\sum_{t_1\le d_1, t_2\le d_2,\atop 0< t_1+t_2 } {t_1+t_2 \choose t_1} \frac{d(m-1)^{n-1-\sum_{i \in S_F}(n_i-1)}}{(t_1+t_2)}\prod_{i \in S_F}\frac{t_i}{d_i}
    \sum_{F \in \F^\e_{d_i;t_i}(\H_i)[u]} \frac{d_i(m-1)^{n_i}\tau(F)}{\prod\limits_{v \in V(F)} d_F^+(v)}
    \\
    &= \sum_{d_1+d_2=d}\sum_{t_1\le d_1, t_2\le d_2,\atop 0< t_1+t_2 } \frac{d}{t_1+t_2}{t_1+t_2 \choose t_1} \frac{t_1}{d_1}\Tr_{d_1;t_1}(\H_1;[u])  \frac{t_2}{d_2} \Tr_{d_2;t_2}(\H_2;[u]),
\end{split}
\end{equation}
where $S_F:=\{i: i \in [2], t_i \ne 0\}$.
Note that $d_i=0$ if and only $t_i=0$, and in this case we have $\frac{t_i}{d_i}=1$ and
$\Tr_{d_i;t_i}(\H_i;[u])=(m-1)^{n_i-1}$.
 So that the Eq. (\ref{cut_tra_new}) holds.
\end{proof}

\begin{cor}\label{Tra_cut_2}
Let $\H_{uw}=\H_1(u) \odot  \H_2(w)$ and $\H_{vw}=\H_1(v) \odot  \H_2(w)$.
For all positive integers $d_1,t_2$ such that
if
\begin{equation}\label{Tr_uv}
\sum_{0< t_1\le d_1} {{t_1+t_2-1} \choose t_2} \Tr_{d_1;t_1}(\H_1;[u]) \ge \sum_{0<t_1\le d_1} {{t_1+t_2-1} \choose t_2} \Tr_{d_1;t_1}(\H_1;[v]),\end{equation}
or equivalently,
\begin{equation}\label{Tr_uv_an}
\sum_{F \in \F^\e_{d_1}(\H_1)[u]} {{r_F(u)+t_2-1} \choose t_2} \frac{\tau(F)}{\prod_{x \in V(F)}d_F^+(x)} \ge \sum_{F \in \F^\e_{d_1}(\H_1)[v]} {{r_F(v)+t_2-1} \choose t_2} \frac{\tau(F)}{\prod_{x \in V(F)}d_F^+(x)},
\end{equation}
then $\Tr_d(\H_{uw}) \ge \Tr_d(\H_{vw})$ for all positive integers $d>d_1$.

Furthermore, if Eq. (\ref{Tr_uv}) or (\ref{Tr_uv_an}) is strict for some $d_1$ and $t_2$,
then $\Tr_d(\H_{uw}) > \Tr_d(\H_{vw})$ for all positive integers $d > d_1$.
\end{cor}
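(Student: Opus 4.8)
The plan is to deduce the inequality from Lemma~\ref{Tra_cut} after separating, inside the full trace $\Tr_d(\H_{uw})$, the Euler rootings in which the cut vertex is a root from those in which it is not. Write $z$ for the coalesced vertex of $\H_{uw}$ (it is $u$ when viewed inside $\H_1$ and $w$ when viewed inside $\H_2$), put $n_i=|V(\H_i)|$ and $n=n_1+n_2-1$. First I would establish
$$\Tr_d(\H_{uw})=\Tr_d(\H_{uw};[z])+(m-1)^{n_2-1}\,\Tr_d(\H_1;[\hat u])+(m-1)^{n_1-1}\,\Tr_d(\H_2;[\hat w]).$$
The key observation is that if $F\in\F^\e_d(\H_{uw})$ and $z$ is the root of no edge of $F$, then $z$ has out-degree $0$ in $R(F)$, so by the balance condition of an Eulerian digraph $z$ is isolated in $R(F)$ and no edge of $F$ meets $z$; since $\H_1-u$ and $\H_2-w$ are vertex-disjoint in $\H_{uw}$ while $R(F)$ is connected, such an $F$ lies entirely in $\H_1$ (avoiding $u$) or entirely in $\H_2$ (avoiding $w$). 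By (\ref{Shaoeq_Hyp3}) the weight of $F$ computed in $\H_{uw}$ equals the weight computed in $\H_i$ times $(m-1)^{n-n_i}$ (the two expressions differ only in the power of $m-1$ indexed by the vertex count), which gives the displayed identity; the same holds for $\H_{vw}$.

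Next I would apply Lemma~\ref{Tra_cut} to $\Tr_d(\H_{uw};[z])$ and isolate the boundary terms of its outer sum. Using the conventions $\frac{0}{0}:=1$ and $\Tr_{0;0}(\H)=(m-1)^{|V(\H)|-1}$, the $d_1=d$ term (then $d_2=0$, $t_2=0$) contributes $(m-1)^{n_2-1}\Tr_d(\H_1;[u])$ and the $d_1=0$ term contributes $(m-1)^{n_1-1}\Tr_d(\H_2;[w])$, while the remaining terms $1\le d_1\le d-1$, $1\le t_1\le d_1$, $1\le t_2\le d-d_1$ form a ``mixed'' sum $M_{uw}$ with summand
$$\frac{d}{t_1+t_2}\binom{t_1+t_2}{t_1}\frac{t_1}{d_1}\frac{t_2}{d-d_1}\,\Tr_{d_1;t_1}(\H_1;[u])\,\Tr_{d-d_1;t_2}(\H_2;[w]).$$
Adding the decomposition of the first paragraph, the $d_1=d$ term together with $(m-1)^{n_2-1}\Tr_d(\H_1;[\hat u])$ collapses to $(m-1)^{n_2-1}\Tr_d(\H_1)$ and the $d_1=0$ term together with $(m-1)^{n_1-1}\Tr_d(\H_2;[\hat w])$ collapses to $(m-1)^{n_1-1}\Tr_d(\H_2)$; both of these depend only on $\H_1,\H_2$ and not on the vertex used for coalescing, so they cancel in the difference and $\Tr_d(\H_{uw})-\Tr_d(\H_{vw})=M_{uw}-M_{vw}$.

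Then I would invoke the elementary identity $\frac{t_1t_2}{t_1+t_2}\binom{t_1+t_2}{t_1}=t_2\binom{t_1+t_2-1}{t_2}$ to rewrite
$$\Tr_d(\H_{uw})-\Tr_d(\H_{vw})=\sum_{d_1=1}^{d-1}\sum_{t_2=1}^{d-d_1}\frac{d\,t_2}{d_1(d-d_1)}\,\Tr_{d-d_1;t_2}(\H_2;[w])\left[\,\sum_{t_1=1}^{d_1}\binom{t_1+t_2-1}{t_2}\bigl(\Tr_{d_1;t_1}(\H_1;[u])-\Tr_{d_1;t_1}(\H_1;[v])\bigr)\right].$$
Here each $\Tr_{d-d_1;t_2}(\H_2;[w])=\sum_F\omega(F)\ge0$ since $\omega(F)=d(m-1)^{n_2}\tau(F)/\prod_v d_F^+(v)\ge0$ (the number of arborescences $\tau(F)$ is nonnegative), the prefactor is positive, and by the hypothesis (\ref{Tr_uv}), used with this $d_1$ and this $t_2$, the bracket is $\ge0$; hence every summand is $\ge0$ and $\Tr_d(\H_{uw})\ge\Tr_d(\H_{vw})$. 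That (\ref{Tr_uv}) and (\ref{Tr_uv_an}) are interchangeable is immediate from $\Tr_{d_1;t_1}(\H_1;[x])=\sum_{F\in\F^\e_{d_1;t_1}(\H_1)[x]}\omega(F)$ and $\F^\e_{d_1}(\H_1)[x]=\bigsqcup_{t_1\ge1}\F^\e_{d_1;t_1}(\H_1)[x]$.

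Finally, for strictness: if (\ref{Tr_uv}) is strict for a pair $(d_1,t_2)=(a,b)$, the corresponding bracket above is strictly positive, so it is enough to produce, for the given $d>a$, some $t_2$ with positive bracket at $(a,t_2)$ and with $\Tr_{d-a;t_2}(\H_2;[w])>0$. For $t_2=b$ and $d=a+mb$ this holds, because rooting $mb$ copies of an edge $e\ni w$ of $\H_2$, with $b$ copies rooted at each of the $m$ vertices of $e$, is an Euler rooting whose multidigraph is $b\overleftrightarrow{K_m}$, which by (\ref{spantree}) satisfies $\tau(b\overleftrightarrow{K_m})=b^{m-1}m^{m-2}>0$ and in which $w$ is the root of exactly $b$ edges; thus $\Tr_{mb;b}(\H_2;[w])>0$, the $(a,b)$-summand is strictly positive and all others are $\ge0$, and one passes to the remaining $d$ by gluing further copies of $\overleftrightarrow{K_m}$ to this $\H_2$-configuration. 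The step I expect to be the main obstacle is the bookkeeping in the second paragraph — aligning the two boundary ranges $d_1\in\{0,d\}$ with the $[\hat u]$ and $[\hat w]$ corrections so that they cancel exactly, which forces careful tracking of the normalising powers $(m-1)^{n_i-1}$ and of the degenerate conventions; the binomial manipulation and the sign count of the last two paragraphs are then routine.
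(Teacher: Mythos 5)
Your main argument is correct and is essentially the paper's own proof in different bookkeeping: the paper splits $\F^\e_d(\H_{uw})$ into rootings lying entirely in $\H_1$, entirely in $\H_2$, and the mixed set $\tilde{\F}_d(\H_{uw})[u,w]$, and applies Lemma \ref{Tra_cut} only to the mixed part, whereas you split according to whether the coalesced vertex is a root and then cancel the $d_1\in\{0,d\}$ boundary terms of Lemma \ref{Tra_cut}; both routes arrive at the same expressions (\ref{tracesum})--(\ref{tracesum2}), use the same identity $\frac{t_1}{t_1+t_2}\binom{t_1+t_2}{t_1}=\binom{t_1+t_2-1}{t_2}$, the same termwise sign argument (which indeed requires the hypothesis (\ref{Tr_uv}) for all pairs $d_1,t_2$, as you read it), and the same observation that (\ref{Tr_uv}) and (\ref{Tr_uv_an}) differ by the positive factor $d_1(m-1)^{n_1}$.

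The one genuine gap is in your strictness argument. You rightly notice (more than the paper does, which simply asserts the ``furthermore'' clause) that the strictly positive bracket at $(d_1,t_2)=(a,b)$ contributes only when $\Tr_{d-a;b}(\H_2;[w])>0$, and your $b\overleftrightarrow{K_m}$ construction gives this for $d=a+mb$. But the passage to general $d>a$ by ``gluing further copies of $\overleftrightarrow{K_m}$'' fails: if every edge of $\H_2$ contains $w$ (for instance $\H_2$ a single edge, or a hyperstar centered at $w$), then balance at $w$ forces $r_F(w)=d_2/m$ for every Euler rooting $F$ consisting of $d_2$ edges of $\H_2$, so $\Tr_{d_2;b}(\H_2;[w])=0$ unless $d_2=mb$; you cannot enlarge $d_2$ while holding $t_2=b$, and when $d-a\neq mb$ your argument exhibits no strictly positive summand (strictness of (\ref{Tr_uv}) at the single pair $(a,b)$ says nothing about the brackets at other pairs). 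So as written you establish strict inequality only for those $d$ with $\Tr_{d-a;b}(\H_2;[w])>0$, not for all $d>a$. This imprecision is present in the corollary and its proof in the paper as well; in the later applications (Corollaries \ref{comp_path}, \ref{comp_path2} and Lemma \ref{comp_cutedge}) strictness of (\ref{Tr_uv}) is in fact available for all $t_2$ and for all sufficiently large admissible $d_1$, which is what actually makes the strict conclusions there go through.
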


\begin{proof}
We have a decomposition
$$ \F^\e_d(\H_{uw})=\F^\e_d(\H_1) \cup \F^\e_d(\H_2) \cup \tilde{\F}_d(\H_{uw})[u,w],$$
where
 \begin{equation}\label{Huw}
 \tilde{\F}_d(\H_{uw})[u,w]:=\{F \in \F^\e_d(\H_{uw}): r_{F|_{\H_1}}(u)>0,
 r_{F|_{\H_2}}(w)>0\},
 \end{equation}
 So,
$$\Tr_d(\H_{uw})=\sum_{F \in \F^\e_d(\H_1)}\omega(F)+\sum_{F \in \F^\e_d(\H_2)}\omega(F)+\sum_{F \in \tilde{\F}_d(\H_{uw})[u,w]}\omega(F).$$
Similarly,
$$\Tr_d(\H_{vw})=\sum_{F \in \F^\e_d(\H_1)}\omega(F)+\sum_{F \in \F^\e_d(\H_2)}\omega(F)+\sum_{F \in \tilde{\F}_d(\H_{vw})[v,w]}\omega(F),$$
where $\tilde{\F}_d(\H_{vw})[v,w]$ is defined as in (\ref{Huw}) by replacing $u$ with $v$.
So,
$$\Tr_d(\H_{uw})-\Tr_d(\H_{vw})=\sum_{F \in \tilde{\F}_d(\H_{uw})[u,w]}\omega(F)-
\sum_{F \in \tilde{\F}_d(\H_{vw})[v,w]}\omega(F).$$

By Lemmas \ref{Tra_cut},
\begin{equation}\label{tracesum}
\sum_{F \in \tilde{\F}_d(\H_{uw})[u,w]}\omega(F)=
 \sum_{d_1+d_2=d,\atop 0< d_1,0< d_2}\sum_{0< t_2\le d_2}\frac{d t_2}{d_1d_2}\Tr_{d_2;t_2}(\H_2;[w])\sum_{0<t_1 \le d_1}{{t_1+t_2-1} \choose t_2} \Tr_{d_1;t_1}(\H_1;[u]),
\end{equation}
\begin{equation}\label{tracesum2}\sum_{F \in \tilde{\F}_d(\H_{uv})[v,w]}\omega(F)=
 \sum_{d_1+d_2=d,\atop 0< d_1,0< d_2}\sum_{0< t_2\le d_2}\frac{dt_2}{d_1d_2}\Tr_{d_2;t_2}(\H_2;[w])\sum_{0< t_1\le d_1}{{t_1+t_2-1} \choose t_2} \Tr_{d_1;t_1}(\H_1;[v]).
\end{equation}
So, if Eq. (\ref{Tr_uv}) holds, then by Eqs. (\ref{tracesum}) and (\ref{tracesum2}), we have $\Tr_d(\H_{uw}) \ge \Tr_d(\H_{vw})$ for all positive integers $d>d_1$.
If Eq. (\ref{Tr_uv}) is strict for some $d_1$ and $t_2$,
surely $\Tr_d(\H_{uw}) > \Tr_d(\H_{vw})$ for all positive integers $d>d_1$.

Suppose $\H_1$ has $n_1$ vertices.
By Eq. (\ref{Shaoeq_Hyp3}),
\begin{align*}
\sum_{0<t_1\le d_1} {{t_1+t_2-1} \choose t_2} \Tr_{d_1;t_1}(\H_1;[u])
&=d_1(m-1)^{n_1} \sum_{0< t_1\le d_1} {{t_1+t_2-1} \choose t_2}
\sum_{F \in \F^\e_{d_1;t_1}(\H_1)[u]}\frac{\tau(F)}{\prod_{x \in V(F)}d_F^+(x)}\\
&=d_1(m-1)^{n_1} \sum_{F \in \F^\e_{d_1}(\H_1)[u]} {{r_F(u)+t_2-1} \choose t_2}
\frac{\tau(F)}{\prod_{x \in V(F)}d_F^+(x)}.
\end{align*}
Similarly,
$$\sum_{0<t_1\le d_1} {{t_1+t_2-1} \choose t_2} \Tr_{d_1;t_1}(\H_1;[v])=
d_1(m-1)^{n_1} \sum_{F \in \F^\e_{d_1}(\H_1)[v]} {{r_F(v)+t_2-1} \choose t_2}
\frac{\tau(F)}{\prod_{x \in V(F)}d_F^+(x)}.$$
So Eq. (\ref{Tr_uv}) is equivalent to Eq. (\ref{Tr_uv_an}).
The result follows.
\end{proof}


\begin{lem}\label{core}
Let $\H$ be an $m$-uniform Veblen multi-hypergraph, and let $e$ be an edge of $\H$  which contains a cored vertex.
If $\H$ has an Euler rooting, then $e$ repeats $k \cdot m$ times for some positive integer $k$, and all cored vertices in $e$ occur as a root of $e$ in $k$ times.
\end{lem}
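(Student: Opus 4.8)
The plan is to fix an Euler rooting $F=(e_1(v_1),\ldots,e_t(v_t))$ of $\H$ and read off the conclusion from the in-degree/out-degree balance of the associated Eulerian digraph $R(F)=\bigcup_{i=1}^t S_{e_i}(v_i)$ at a cored vertex of $e$. Write $\mu$ for the multiplicity of $e$ in $\H$, and let $v$ be a cored vertex contained in $e$, so that every edge of $\H$ incident to $v$ is a copy of $e$ (hence $v$ lies in all $\mu$ copies of $e$ and in no other edge). Let $s$ be the number of copies of $e$ rooted at $v$ in $F$. Since $F$ is an Euler rooting, $R(F)$ is Eulerian, so $d_F^+(v)=d_F^-(v)$, and the whole argument is the computation of these two quantities in terms of $\mu$ and $s$.

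For the out-degree: because $v$ is incident only to copies of $e$, the only stars of $R(F)$ emitting an arc from $v$ are those of the form $S_e(v)$, and each of the $s$ such stars emits exactly $m-1$ arcs out of $v$ (one to each of the other $m-1$ vertices of the $m$-set $e$); hence $d_F^+(v)=s(m-1)$. For the in-degree: an arc into $v$ can only come from a star $S_e(w)$ with $w\in e\setminus\{v\}$, and each copy of $e$ rooted at such a $w$ contributes exactly one arc $(w,v)$; since $v$ lies in every copy of $e$, there are exactly $\mu-s$ such copies, so $d_F^-(v)=\mu-s$. Equating gives $s(m-1)=\mu-s$, i.e.\ $\mu=sm$. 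As $e$ is an edge of $\H$ we have $\mu\ge 1$, so $k:=s=\mu/m$ is a positive integer and $e$ repeats $\mu=km$ times. Finally, the identity $s=\mu/m$ was obtained for an arbitrary cored vertex $v$ of $e$ (and it does not depend on the choice of $F$), so every cored vertex of $e$ is the root of exactly $k$ copies of $e$, which is the second assertion.

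I do not anticipate a genuine obstacle: once the bookkeeping of rooted stars is in place, everything collapses to the degree balance at a single vertex. The only point requiring a little care is the in-degree count, namely that a copy of $e$ rooted at $w\neq v$ contributes \emph{exactly one} arc into $v$ (this uses $|e|=m$ with distinct vertices and $v\in e$) and that $v$ receives no arcs from any other edge of $\H$. It is also worth noting explicitly, as a byproduct, that the mere existence of an Euler rooting already forces $m\mid\mu$, so the two conclusions of the lemma are two facets of the same balance equation rather than independent facts.
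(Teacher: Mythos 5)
Your proof is correct and takes essentially the same approach as the paper's: both reduce the lemma to the in-/out-degree balance at a cored vertex $v$ of $e$ in the Eulerian digraph $R(F)$, namely $s(m-1)=\mu-s$, which gives $\mu=sm$ and the same root count $k=\mu/m$ for every cored vertex of $e$. The only cosmetic difference is that the paper first observes that every vertex of $e$ must occur as a root, while you deduce $s\ge 1$ afterwards from $\mu=sm\ge 1$.
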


\begin{proof}
Suppose that $e$ repeats $t$ times in $\H$.
Let $F$ be an Euler rooting of $\H$.
As $R(F)$ is Eulerian, any vertex of $e$ will occur as a root.
Suppose that $v \in e$ is cored vertex which occurs as root in $k$ times for some positive integer $k$.
Noting that $v$ is only contained in $e$, so $v$ occurs as a non-root in $t-k$ times.
Since $R(F)$ is Eulerian, the outdegree of $v$ in $R(F)$ is same as its indegree, which yields that $ k(m-1)=t-k.$
So $t=km$.
If $u \in e$ is another cored vertex which occurs as root in $k'$ times, then
$k'(m-1)=km-k'$, which implies that $k'=k$.
The result follows.
\end{proof}

The underlying hypergraph of a multi-hypergraph $\H$, denoted by $\underline{\H}$, is the hypergraph obtained from $\H$ by removing its duplicate edges.

\begin{cor}\label{tree_root}
Let $\H$ be an $m$-uniform Veblen multi-hypergraph whose underlying hypergraph $\underline{\H}$ is a hypertree.
Then $\H$ has a unique Euler rooting such that all vertices of each edge occur as roots in same times, and hence every edge of $\H$ repeats in a multiple of $m$ times.
\end{cor}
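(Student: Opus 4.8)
The plan is to reduce the statement about a Veblen multi-hypergraph $\H$ with underlying hypertree to an inductive peeling argument, repeatedly applying Lemma~\ref{core} at the leaves. First I would observe that since $\underline{\H}$ is a hypertree, it has at least one pendent edge, i.e.\ an edge $e$ containing $|e|-1=m-1$ cored vertices (in $\underline{\H}$; in $\H$ these vertices lie only in the copies of $e$). If $\H$ had an Euler rooting, Lemma~\ref{core} forces the number of copies of $e$ in $\H$ to be $km$ for some positive integer $k$, and each of the $m-1$ cored vertices of $e$ to occur as the root of $e$ exactly $k$ times. The remaining $km - (m-1)k = k$ root-occurrences of the $km$ copies of $e$ must then be assigned to the unique non-cored vertex $w$ of $e$; so in fact \emph{every} vertex of $e$ (cored or not) serves as root of $e$ exactly $k$ times, which is the "all vertices of each edge occur as roots in the same number of times" conclusion for this edge, and $km$ is visibly a multiple of $m$.

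Next I would set up the induction on the number of edges of $\underline{\H}$. The key structural point is that after deleting all $km$ copies of the pendent edge $e$ from $\H$, the result $\H'$ is again an $m$-valent multi-hypergraph (each cored vertex of $e$ disappears entirely, and the degree of $w$ drops by $km$, a multiple of $m$, so it remains a multiple of $m$) whose underlying hypergraph $\underline{\H'}$ is the hypertree $\underline{\H}-e$ — possibly with $w$ now an isolated vertex if $e$ was the only edge at $w$, in which case we simply drop $w$. Moreover an Euler rooting of $\H$ restricts to an Euler rooting of $\H'$: the Euler tour of $R(F)$ visits the subgraph on $e$'s copies and the rest through the cut vertex $w$ only, so deleting the arcs of the stars rooted in $e$-copies leaves an Eulerian directed graph on the rest. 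Conversely, I would need to check that the (unique, by induction) Euler rooting of $\H'$ together with the forced rooting of the $e$-copies described above assembles to a genuine Euler rooting of $\H$ — this uses that $w$ is a cut vertex of $\underline{\H}$, so $R(F)$ decomposes as an edge-disjoint union through $w$ and an Euler circuit can be spliced at $w$ (each passage through $w$ in the $\H'$-part can be interleaved with a loop through the $e$-copies, which is possible precisely because the $e$-part is itself Eulerian on its own vertex set — a $k$-fold blow-up of $m$ copies of $\overleftrightarrow{K_m}$ pattern rooted symmetrically).

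I would then conclude: by the induction hypothesis $\H'$ has a unique Euler rooting with all vertices of each edge occurring equally often as roots and each edge multiplicity a multiple of $m$; gluing on the forced rooting of the $e$-copies gives an Euler rooting of $\H$ with the same property, and Lemma~\ref{core} shows this rooting of the $e$-part is forced, so combined with uniqueness for $\H'$ we get uniqueness for $\H$. The base case is a single edge with multiplicity $t$, all of whose $m$ vertices are cored, where Lemma~\ref{core} (or a direct count: indegree $=$ outdegree at each vertex forces $t=km$ and each vertex roots it $k$ times) gives the claim. The main obstacle I anticipate is the "conversely" direction — verifying that the restriction map from Euler rootings of $\H$ to Euler rootings of $\H'$ is a bijection, i.e.\ that one can always splice the forced local rooting at the pendent edge into an Euler tour of the rest without obstruction; this should follow from the BEST theorem style reasoning (Eq.~\eqref{cir}) or, more elementarily, from the fact that a connected $m$-valent multi-hypergraph's associated directed star-graph is connected and balanced hence Eulerian, applied to the subgraph supported on the $e$-copies, but care is needed to phrase it so that the ordering condition $v_1 \le \cdots \le v_t$ defining a rooting is respected.
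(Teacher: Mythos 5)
Your proposal is correct and follows essentially the same route as the paper: induction on the number of edges of $\underline{\H}$, peeling a pendent edge $e$ whose $km$ copies and forced root distribution come from Lemma~\ref{core}, then restricting/gluing Euler rootings of $\H'$ and of the $e$-copies through the shared vertex $w$ and invoking uniqueness of the pieces. The extra care you take with the splicing step (balanced plus connected implies Eulerian) is detail the paper leaves implicit, and the ordering condition $v_1\le\cdots\le v_t$ you worry about is harmless since a rooting is determined by the root assignment alone.
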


\begin{proof}

Let $\eta(\underline{\H})$ be the number of edges of $\underline{\H}$.
We will show the assertion by induction on $\eta(\underline{\H})$.
If $\eta(\underline{\H})=1$, the assertion holds by Lemma \ref{core}

Assume the assertion holds for $\eta(\underline{\H})=t-1$.
Suppose $\underline{\H}$ has $t$ edges.
As $\underline{\H}$ is a hypertree, it contains at least one pendent edge $e$.
By Lemma \ref{core}, $\H$ contains $km$ copies of $e$, denoted by $e^{(km)}$ for some positive integer $k$. 
Let $\H'$ be obtained from $\H$ by deleting the edges $e^{(km)}$ together with the vertices therein.
Note that $\H'$ is also an $m$-uniform Veblen multi-hypergraph whose underlying hypergraph $\underline{\H'}$ is a hypertree with $t-1$ edges.
By the assumption,
$\H'$ has a unique Euler rooting $F'$ such that all vertices of each edge occur as roots in same times.
Obviously, $e^{(km)}$ has a unique Euler rooting $F''$ with all vertices occurring as root in $k$ times.
So, combing $F'$ and $F''$  we will get an Euler rooting of $\H$.

Now let $F$ be an Euler rooting of $\H$.
Then $F|_{\H'}$ and $F|_{e^{(km)}}$ are respectively the Euler rootings $\H'$ and $e^{(km)}$.
By the above discussion, the Euler rootings of $\H'$ and $e^{(km)}$ are both unique.
So $\H$ has a unique Euler rooting with property as in the assertion.
The result follows by induction.
\end{proof}

Let $\H_0,\H_1,\ldots,\H_p$ be pairwise disjoint connected hypergraphs, where $p \ge 1$.
Let $w_0 \in V(\H_0)$, and $u_i \in V(\H_i)$ for each $i \in [p]$.
Denote by $\H_0(w_0) \odot (\H_1(u_1),\ldots,\H_p(u_p))$ the hypergraph obtained from $\H_0$ by attaching $\H_1,\ldots,\H_p$ to $\H_0$ with $u_1,\ldots,u_p$ all identified with $w_0$.
Let $w_1,\ldots,w_p \in V(\H_0)$.
Denote by $\H_0(w_1,\ldots,w_p) \odot (\H_1(u_1),\ldots,\H_p(u_p))$ the hypergraph obtained from $\H_0$ by attaching $\H_1,\ldots,\H_p$ to $\H_0$ with $u_i$ identified with $w_i$ for each $i \in [p]$.

\begin{cor}\label{comp_path}
Let $\H_{r,s}:=\H(w) \odot (P_{r}^m(u_1), P_s^m(u_2))$, where $u_1,u_2$ are respectively the pendent vertices of $P_{r}^m$ and $P_s^m$.
If $r \ge s \ge 1$, then
$$ \Tr_d(\H_{r,s}) \ge \Tr_d(\H_{r+1,s-1}),$$
with strict inequality if $d \ge sm$.
\end{cor}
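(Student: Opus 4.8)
The plan is to exhibit both $\H_{r,s}$ and $\H_{r+1,s-1}$ as coalescences of one common hyperpath with $\H$ at two neighbouring vertices, and then to invoke Corollary~\ref{Tra_cut_2}. Since $u_1$ and $u_2$ are both identified with $w$, the two attached hyperpaths fuse into $P:=P_{r+s}^m$; labelling the edges of $P$ as $e_1,\dots,e_{r+s}$ starting from the shorter side and writing $c_i:=e_i\cap e_{i+1}$ (with $c_0$ the pendent vertex lying in $e_1$), we get $\H_{r,s}=P(c_s)\odot\H(w)$ and $\H_{r+1,s-1}=P(c_{s-1})\odot\H(w)$, and the hypothesis $r\ge s$ says precisely that $c_s$ is at least as central in $P$ as $c_{s-1}$. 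Applying Corollary~\ref{Tra_cut_2} with $\H_1=P$, $\H_2=\H$, $u=c_s$, $v=c_{s-1}$, it suffices to prove that for all positive integers $d_1,t_2$,
\[
\Phi(d_1,t_2;c_s):=\sum_{F\in\F^\e_{d_1}(P)[c_s]}{r_F(c_s)+t_2-1\choose t_2}\,\frac{\tau(F)}{\prod_{x\in V(F)}d_F^+(x)}\ \ge\ \Phi(d_1,t_2;c_{s-1}),
\]
and that this is strict for $d_1\ge sm$; the corollary then yields $\Tr_d(\H_{r,s})\ge\Tr_d(\H_{r+1,s-1})$ for all $d$, with strict inequality for $d\ge sm$ via its strictness clause.

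The next step is to make the sums $\Phi(d_1,t_2;c_j)$ combinatorially transparent. Since $P$ is a hypertree, Corollary~\ref{tree_root} says that every Euler‑rooted Veblen sub‑multi‑hypergraph $F$ of $P$ is obtained by choosing a non‑empty block of consecutive edges $e_a,\dots,e_b$ together with positive multiplicities $k_a,\dots,k_b$ (the edge $e_i$ occurring $mk_i$ times), and that this $F$ is then the unique Euler rooting. A short computation — using (\ref{spantree}), the fact that $R(F)$ is a chain of copies of $k_i\overleftrightarrow{K_m}$ glued at cut vertices, and the multiplicativity of the arborescence count over cut vertices — gives, with $L:=b-a+1$,
\[
\frac{\tau(F)}{\prod_{x\in V(F)}d_F^+(x)}=\frac{m^{(m-2)L}}{(m-1)^{(m-1)L+1}}\cdot\frac{\prod_{i=a}^{b}k_i}{k_a k_b\prod_{i=a}^{b-1}(k_i+k_{i+1})},
\]
so this quantity depends only on $L$ and the multiplicity sequence, is invariant under translating the block along $P$, and is symmetric under reversing $(k_a,\dots,k_b)$; moreover $r_F(c_j)=k_j+k_{j+1}$ when $a\le j\le b-1$, and $r_F(c_j)$ equals $k_a$ or $k_b$ according as $c_j$ is the left or right extreme vertex of the block. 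Collecting configurations by $L$ and by multiplicity sequence, each $\Phi(d_1,t_2;c_j)$ becomes $\sum_{L,\mathbf k}g(L,\mathbf k)\big(\sum_{\theta}{k_\theta+k_{\theta+1}+t_2-1\choose t_2}\big)$, the inner sum running over the admissible placements of $c_j$ inside a block of shape $(L,\mathbf k)$.

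The heart of the proof is the resulting inequality between weighted sums, and the natural device is a weight‑preserving injection. Given a configuration $F$ containing $c_{s-1}$, slide its block one edge towards $c_s$: replace $[a,b]$ by $[a+1,b+1]$ and carry the multiplicities along. This produces a configuration containing $c_s$ with the same $L$, the same multiplicity sequence, and the same value of $r_F$ at the distinguished vertex — hence the same summand — and one checks that its image is exactly the set of $c_s$‑configurations whose block does not start at $e_1$. This already gives $\Phi(d_1,t_2;c_s)\ge\Phi(d_1,t_2;c_{s-1})$, and it gives strictness precisely when some $c_s$‑configuration has its block starting at $e_1$; that forces the block to contain all of $e_1,\dots,e_s$, i.e. $d_1\ge sm$, which is the stated threshold. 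The one point that needs care is the configurations containing $c_{s-1}$ whose block already ends at $e_{r+s}$, for which the sliding map would run off the end; such configurations require at least $r+1$ distinct edges, and since $r\ge s$ they are disposed of using the reflection symmetry $\Phi(d_1,t_2;c_j)=\Phi(d_1,t_2;c_{r+s-j})$ of $P$, which allows them to be relocated near the left end instead.

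I expect this overhang case to be the main obstacle. After passing to the symmetrised auxiliary weights $h_\theta:=\sum_{\mathbf k}g(L,\mathbf k){k_\theta+k_{\theta+1}+t_2-1\choose t_2}$ (with $k_0:=k_{L+1}:=0$), which satisfy $h_\theta=h_{L-\theta}$, the overhang contribution reduces to showing that $h_\theta$ is unimodal in $\theta$ with its extreme values at the ends $\theta\in\{0,L\}$; since the summands $\binom{k_\theta+k_{\theta+1}+t_2-1}{t_2}$ are not monotone term by term, proving this requires exploiting the reversal symmetry of $g$ to extract the needed cancellations (and is a hypergraph analogue of the classical monotonicity of the continued‑fraction walk generating functions of paths). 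Once the weighted inequality and its strictness for $d_1\ge sm$ are established, Corollary~\ref{Tra_cut_2} completes the proof of the corollary.
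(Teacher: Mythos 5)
Your reduction is the same as the paper's: write $\H_{r,s}$ and $\H_{r+1,s-1}$ as coalescences of $\H$ with the common hyperpath $P=P^m_{r+s}$ at the two adjacent vertices $c_s,c_{s-1}$, invoke Corollary \ref{Tra_cut_2}, and use Corollary \ref{tree_root} plus (\ref{spantree}) to see that every $F\in\F^\e_{d_1}(P)$ is a block of consecutive edges with multiplicities $mk_i$ and that its weight has the closed form you state (your formula is correct and agrees with the paper's computation of $\tau(F)$ and $\prod_x d^+_F(x)$). The bookkeeping of your sliding injection is also right: it reduces $\Phi(d_1,t_2;c_s)-\Phi(d_1,t_2;c_{s-1})$ to the contribution of blocks anchored at $e_1$ (length $\ge s$, positive) minus the contribution of blocks of length $L\ge r+1$ anchored at $e_{r+s}$, and after reflecting the latter you need, for each such $L=r+\ell$ and each fixed total multiplicity, an inequality of the form $h_{\ell-1}\le h_{s}$, i.e.\ that your symmetrised weights $h_\theta$ increase from the ends of the block toward its middle.

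That last claim is exactly where the proposal stops, and it is a genuine gap rather than a finishing touch. The summands ${k_\theta+k_{\theta+1}+t_2-1\choose t_2}$ are not monotone in $\theta$ term by term, and pairing each multiplicity sequence with its reversal does not suffice: for a sequence with very large end multiplicities and small interior ones, the paired difference ${k_s+k_{s+1}+t_2-1\choose t_2}+{k_{L-s}+k_{L-s+1}+t_2-1\choose t_2}-{k_1+t_2-1\choose t_2}-{k_L+t_2-1\choose t_2}$ is negative, so the needed inequality only holds after a real cancellation across different sequences of the same length. In other words, the ``unimodality of $h_\theta$'' you defer is essentially the corollary itself restricted to configurations covering a whole stretch of the path --- it is the heart of the proof, not an obstacle one can wave at. The paper handles precisely this point with a different device: it fixes the far vertex $u_s$, uses the reflection only on the configurations avoiding $u_s$ (where it gives exact equality), isolates a strictly positive class $[u,u_s,\hat v]$, and for the common class $[u,v,u_s]$ symmetrises over independently swapping the multiplicities $m_i\leftrightarrow m'_i$ of the mirror-image edges (a $2^{q_v}$-element orbit, not the two-element reversal orbit), which yields the telescoping factorisation of $D_\emptyset$ into $\alpha f_1\prod_i f_i\prod_i(m'_i-m_i)^2$ times a nonnegative binomial difference, hence $D_\emptyset\ge0$. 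Without an argument of this strength --- or an actual proof of your monotonicity of $h_\theta$ --- the proposal does not establish the inequality, so as written it is incomplete at its crucial step. (A minor additional point: Corollary \ref{Tra_cut_2} turns strictness at level $d_1$ into strictness of $\Tr_d$ only for $d>d_1$, so you also need to check the threshold bookkeeping for the claim ``strict if $d\ge sm$'' once the main inequality is in place.)
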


\begin{proof}
We label and order the vertices of $P_{r+s}^m$ from left to right as follows (see Fig. \ref{path}):
$$  v_{s-1}<v_{s-1,1}<\cdots<v_{s-1,m-2}<v_{s-2}<\cdots<v_1<\cdots< v_0<\cdots<u_0<\cdots<u_1<\cdots<u_{r},$$
and label the edges of $P_{r+s}^m$ from left to right as
$$ e_{s-1}, \cdots, e_1, e_0, e'_1,\cdots, e'_r,$$
where $e_i=\{v_i,v_{i,1},\ldots,v_{i,m-2}, v_{i-1}\}$ for $i=0,1,\ldots,s-1$, $v_{-1}:=u_0$, and $e'_i=\{u_{i-1}, u_{i-1,1},\ldots,u_{i-1,m-2},u_i\}$ for $i=1,2,\ldots,r$.
From Fig. \ref{path}, $P_{r+s}^m$ is obtained from the path $P_{r+s}: v_{s-1}v_{s-2}\cdots v_1 v_0 u_0 u_1 \cdots u_r$ by inserting $m-2$ additional vertices into each edge.

\vspace{2mm}

\begin{figure}[h]
\centering
\includegraphics[scale=.8]{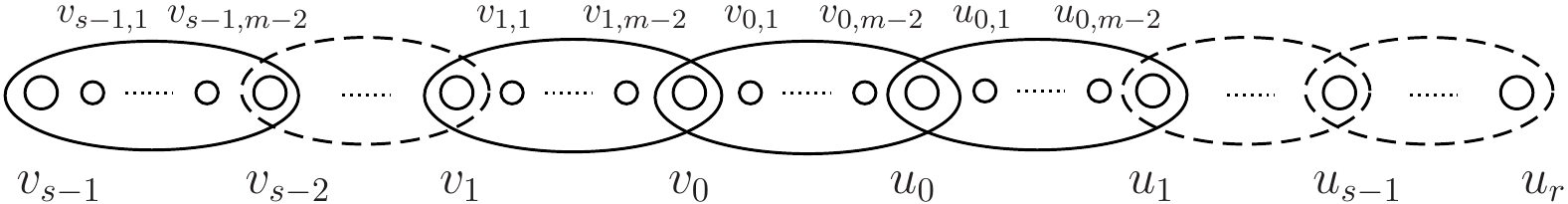}
\caption{The hyperpath $P_{r+s}^m$}\label{path}
\end{figure}

Let $u:=u_0$ and $v:=v_0$. Thus
$\H_{r,s}=\H(w)\odot P_{r+s}^m(u)$ and $\H_{r+1,s-1}=\H(w)\odot P_{r+s}^m(v)$.
We will prove that Eq. (\ref{Tr_uv}) holds for $\H_1=P_{r+s}^m=:P$.
Define a ``reflection'' $\phi$ on $P$ with respect to the edge $e_0$, that is,
$$ \phi=\prod_{i=0}^{s-1}(v_i u_i) \cdot \prod_{i=1}^{s-1} \prod_{j=1}^{ m-2 }(v_{i,j} u_{i-1,j}),$$
where $(ab)$ denotes the transposition that swaps $a$ and $b$.
Considering the restriction of $\phi$ on the edges of $P$, $\phi$ swaps $e_i$ and $e'_i$ for $i \in [s-1]$ and fixed all other edges.

Note that
$$ \F^\e_{d_1}(P)[v]=\F^\e_{d_1}(P)[v,\hat{u}_s] \cup \F^\e_{d_1}(P)[v,u_s],~
\F^\e_{d_1}(P)[u]=\F^\e_{d_1}(P)[u,\hat{u}_s] \cup \F^\e_{d_1}(P)[u,u_s],
$$
For each $F \in \F^\e_{d_1}(P)[v,\hat{u}_s]$, as the Veblen hypergraph $\mathcal{V}(F)$ is connected, $\underline{\mathcal{V}(F)}$ is a hyperpath which contains no the edge $e'_s$.
Define a map from $\F^\e_{d_1}(P)[v,\hat{u}_s]$ to $\F^\e_{d_1}(P)[u,\hat{u}_s]$, denoted by $\tilde{\phi}$, such that if
$ F=(e_{q_1}(v_{q_1}), \ldots, e_{0}(v_0), \ldots, e_{0}(u_0), \ldots, e'_{q_2}(u_{q_2}))$, then
\begin{equation}\label{tphi}\tilde{\phi}(F)=(\phi(e'_{q_2})(\phi(u_{q_2})), \ldots, \phi(e_{0})(\phi(u_0)), \ldots, \phi(e_{0})(\phi(v_0)), \phi(e_{q_1})(\phi(v_{q_1}))).
\end{equation}
By Corollary \ref{tree_root}, $r_F(v)=r_{\tilde{\phi}(F)}(u)$, and $\tilde{\phi}$ is a bijection between $\F^\e_{d_1;t_1}(P)[v;\hat{u}_s]$ and $\F^\e_{d_1;t_1}(P)[u;\hat{u}_s]$.
By Eq. (\ref{Shaoeq_Hyp3}), we have
\begin{equation}\label{refl}
\Tr_{d_1;t_1}(P;[v;\hat{u}_s])=\Tr_{d_1;t_1}(P;[u;\hat{u}_s]).
\end{equation}

Note that
$\F^\e_{d_1}(P)[v,{u}_s]=\F^\e_{d_1}(P)[v,u,{u}_s]$ and
\begin{equation}\label{addi} \F^\e_{d_1}(P)[u,{u}_s]=\F^\e_{d_1}(P)[u,{u}_s,\hat{v}]\cup \F^\e_{d_1}(P)[u,{u}_s, v].
\end{equation}
By Corollary \ref{tree_root}, $\F^\e_{d_1}(P)[u,{u}_s,\hat{v}] \ne \emptyset $  if $d_1 \ge sm$,
and $\F^\e_{d_1}(P)[u,{u}_s, v]\ne \emptyset $  if $d_1 \ge (s+1)m$.
We will prove that if $d_1 \ge (s+1)m$,
\begin{equation}\label{comp}
\sum_{0< t_1\le d_1} {{t_1+t_2-1} \choose t_2} \Tr_{d_1;t_1}(P;[u;v,u_s]) \ge \sum_{0< t_1\le d_1} {{t_1+t_2-1} \choose t_2} \Tr_{d_1;t_1}(P;[v;u,u_s]),
\end{equation}
or equivalently
\begin{equation}\label{compequiv}
\sum_{F \in \F^\e_{d_1}(P)[u,v,{u}_s]} {{r_F(u)+t_2-1} \choose t_2} \frac{\tau(F)}{\prod_{x \in V(F)}d_F^+(x)}
\ge \sum_{F^\e \in \F_{d_1}(P)[u,v,{u}_s]} {{r_F(v)+t_2-1} \choose t_2} \frac{\tau(F)}{\prod_{x \in V(F)}d_F^+(x)}.
\end{equation}

By Corollary \ref{tree_root}, if $ \F^\e_{d_1}(P)[u,v,{u}_s] \ne \emptyset$, then $m \mid d_1$; furthermore, we have a decomposition as
$$ \F^\e_{d_1}(P)[u,v,{u}_s]=\bigcup_{0\le q_v \le s-1,\atop s \le q_u \le r}
\bigcup_{\sum\limits_{i=0}^{q_v} m_i+ \sum\limits_{i=1}^{q_u} m_i=\frac{d_1}{m}}\bigcup_{\sigma_i \in \mathbb{S}(\{m_i,m'_i\}), \atop
i \in [q_v]}\F(m_0,\sigma_1(m_1,m'_1)\ldots,
\sigma_{q_v}(m_{q_v},m'_{q_v}),m'_{q_v+1},\ldots,m'_{q_u})
,$$
where
$\F(m_0,\sigma_1(m_1,m'_1)\ldots,
\sigma_{q_v}(m_{q_v},m'_{q_v}),m'_{q_v+1},\ldots,m'_{q_u})$ denotes the set of $d_1$-tuples of ordered rooted edges consisting of $e_0$ with multiplicity $mm_0$, $e_i$ with multiplicity $m\sigma_i(m_i)$,  $e'_i$ with multiplicity $m\sigma_i(m'_i)$ for $i \in [q_v]$, and edges $e'_{j}$ with multiplicity $mm'_j$ for $j=q_v+1,\ldots,q_u$,
and $\mathbb{S}(\{m_i,m'_i\})$ denotes the symmetric group on the set $\{m_i,m'_i\}$ for $i \in [q_v]$.

Given $q_v,q_u,m_0,m_1,m'_1,\ldots,m_{q_v},m'_{q_v},m'_{q_v+1},\ldots,m'_{q_u}$, simply denote
$$\F(\sigma_1,\sigma_2,\ldots,\sigma_{q_v}):=\F(m_0,\sigma_1(m_1,m'_1)\ldots,
\sigma_{q_v}(m_{q_v},m'_{q_v}),m'_{q_v+1},\ldots,m'_{q_u}),$$
and
$$
D(\sigma_1,\sigma_2,\ldots,\sigma_{q_v}):=
\sum_{F\in \F(\sigma_1,\sigma_2,\ldots,\sigma_{q_v})}
  \left({{r_F(u)+t_2-1} \choose t_2}
- {{r_F(v)+t_2-1} \choose t_2} \right)\frac{\tau(F)}{\prod\limits_{x \in V(F)}d_F^+(x)}.
$$
For $2 \le i \le q_v$, denote
$$ D(\sigma_i,\ldots,\sigma_{q_v})
=\sum_{\sigma_1,\ldots,\sigma_{i-1}}D(\sigma_1,\sigma_2,\ldots,\sigma_{q_v}),
$$
and
$$ D_{\emptyset}=\sum_{\sigma_1,\ldots,\sigma_{q_v}}
D(\sigma_1,\sigma_2,\ldots,\sigma_{q_v}).
$$
We may assume that $m_i \ne m'_i$ for each $i \in [q_v]$;
otherwise, $D_\emptyset =0$ as shown in the below.

Note that for $i=0,1, \ldots,q_v-1$, $v_i$ is the root of two adjancent multiple edges,  and
for $i=0,1,\ldots q_u-1$, $u_i$ is the root of two adjancent multiple edges, which causes different elements in $\F(\sigma_1,\sigma_2,\ldots,\sigma_{q_v})$.
 In fact, by Corollary \ref{tree_root} there are exactly
 $$\prod_{i=0}^{q_{v}-1}{\sigma_i(m_i)+\sigma_{i+1}(m_{i+1}) \choose \sigma_i(m_i)}{\sigma_i(m'_i)+\sigma_{i+1}(m'_{i+1}) \choose \sigma_i(m'_i)}\cdot
 {\sigma_{q_v}(m'_{q_v})+m'_{q_v+1} \choose \sigma_{q_v}(m'_{q_v})} \cdot
 \prod_{i=q_v+1}^{q_u-1} {m'_{i}+m'_{i+1} \choose m'_i}
 $$
 $d_1$-tuples of ordered rooted edges in $\F(\sigma_1,\ldots,\sigma_{q_v})$, all corresponding to the same summand in $D(\sigma_1,\ldots,\sigma_{q_v})$, where $\s_0=1$ and $m_0=m'_0$.
By Corollary \ref{tree_root}, for each $F\in \F(\sigma_1,\sigma_2,\ldots,\sigma_{q_v})$,
$$ r_F(u)=m_0+\sigma_1(m'_1),
r_F(v)=m_0+\sigma_1(m_1),$$
\begin{align*}
\prod_{x \in V(F)}d_F^+(x)
&=\sigma_{q_v}(m_{q_v})(\sigma_{q_v}(m'_{q_v})+m'_{q_v+1})\prod_{i=0}^{q_v-1} (\sigma_i(m_i)+\sigma_{i+1}(m_{i+1}))(\sigma_i(m'_i)+\sigma_{i+1}(m'_{i+1}))\\
& \cdot \prod_{i=q_v+1}^{q_u-1}(m'_i+m'_{i+1}) \cdot m'_{q_u}
\left(m_0\prod_{i=1}^{q_v} m_i m'_i \prod_{i=q_v+1}^{q_u} m'_i\right)^{m-2}(m-1)^{(q_u+q_v+1)(m-1)+1},
\end{align*}
and by Eq. (\ref{spantree}),
$$\tau(F)=m^{(q_u+q_v+1)(m-2)}\left(m_0\prod_{i=1}^{q_v} m_i m'_i \prod_{i=q_v+1}^{q_u} m'_i\right)^{m-1}.$$
So we express $D(\sigma_1,\sigma_2,\ldots,\sigma_{q_v})$ as follows:
\begin{align*}
D(\sigma_1,\sigma_2,\ldots,\sigma_{q_v})&=\alpha {m_0+\sigma_{1}(m_{1})-1 \choose m_0-1}{m_0+\sigma_{1}(m'_{1})-1 \choose m_0-1}\\
& \quad \cdot \left(
{m_0+\sigma_1(m'_1)+t_2-1 \choose t_2} -
{m_0+\sigma_1(m_1)+t_2-1 \choose t_2}\right)\\
&\quad \cdot {\sigma_{q_v}(m'_{q_v})+m'_{q_v+1}-1 \choose \sigma_{q_v}(m'_{q_v})-1} \prod_{i=1}^{q_{v}-1}{\sigma_i(m_i)+\sigma_{i+1}(m_{i+1})-1 \choose \sigma_i(m_i)-1}{\sigma_i(m'_i)+\sigma_{i+1}(m'_{i+1})-1 \choose \sigma_i(m'_i)-1} \cdot,
 \end{align*}
where $$\alpha=\frac{m^{(q_u+q_v+1)(m-2)} \prod_{i=q_v+1}^{q_u-1} {m'_{i}+m'_{i+1} -1\choose m'_i-1}}
{m_0(m-1)^{(q_u+q_v+1)(m-1)+1} }
.$$

Denote
$$f_1(m_0,m_1,m'_1)=
\frac{{m_0+m_1-1 \choose m_0-1}{m_0+m'_1-1 \choose m_0-1}\left({m_0+m'_1+t_2-1 \choose t_2}-
{m_0+m_1+t_2-1 \choose t_2}\right)}{m'_1-m_1},$$
and
$$f_i(m_{i-1},m'_{i-1},m_i,m'_i)=\frac{{m_{i-1}+m_i-1 \choose m_{i-1}-1}{m'_{i-1}+m'_i-1 \choose m'_{i-1}-1}-
{m'_{i-1}+m_i-1 \choose m'_{i-1}-1}{m_{i-1}+m'_i-1 \choose m_{i-1}-1}}{(m'_{i-1}-m_{i-1})(m'_i-m_i)},i=2,\ldots,q_v.$$
We have
\begin{align*}
D(\sigma_2,\ldots,\sigma_{q_v})&=D(1,\sigma_2,\ldots,\sigma_{q_v})+
D((m_1m'_1)),\sigma_2,\ldots,\sigma_{q_v})\\
&=\alpha f_1(m_0,m_1,m'_1)(m'_1-m_1)\\
&\cdot
\left({m_1+\sigma_2(m_2)-1 \choose m_1-1}{m'_1+\sigma_2(m'_2)-1 \choose m'_1-1}
-{m'_1+\sigma_2(m_2)-1 \choose m'_1-1}{m_1+\sigma_2(m'_2)-1 \choose m_1-1}\right)
\\
&\cdot {\sigma_{q_v}(m'_{q_v})+m'_{q_v+1}-1 \choose \sigma_{q_v}(m'_{q_v})-1} \prod_{i=2}^{q_{v}-1}{\sigma_i(m_i)+\sigma_{i+1}(m_{i+1}-1) \choose \sigma_i(m_i)-1}{\sigma_i(m'_i)+\sigma_{i+1}(m'_{i+1})-1 \choose \sigma_i(m'_i)-1}.
\end{align*}
Similarly,
\begin{align*}
D(\sigma_3,\ldots,\sigma_{q_v})&=\alpha f_1(m_0,m_1,m'_1)f_2(m_1,m'_1,m_2,m'_2)(m'_1-m_1)^2(m'_2-m_2)\\
&\quad \cdot
\left({m_2+\sigma_3(m_3)-1 \choose m_2-1}{m'_2+\sigma_3(m'_3)-1 \choose m'_2-1}
-{m'_2+\sigma_3(m_3)-1 \choose m'_2-1}{m_2+\sigma_3(m'_3)-1 \choose m_2-1}\right)
\\
& \quad \cdot {\sigma_{q_v}(m'_{q_v})+m'_{q_v+1}-1 \choose \sigma_{q_v}(m'_{q_v})-1}\prod_{i=3}^{q_{v}-1}{\sigma_i(m_i)+\sigma_{i+1}(m_{i+1})-1 \choose \sigma_i(m_i)-1}{\sigma_i(m'_i)+\sigma_{i+1}(m'_{i+1})-1 \choose \sigma_i(m'_i)-1}.
\end{align*}

By induction, we have
\begin{align*}
D(\sigma_{q_v})&=\alpha f_1(m_0,m_1,m'_1) \prod_{i=2}^{q_v-1}f_i(m_{i-1},m'_{i-1},m_i,m'_i)\cdot
\prod_{i=1}^{q_v-2}(m'_i-m_i)^2\cdot(m'_{q_v-1}-m_{q_v-1})\\
&\quad \cdot
\left({m_{q_v-1}+\sigma_{q_v}(m_{q_v})-1 \choose m_{q_v-1}-1}{m'_{q_v-1}+\sigma_{q_v}(m'_{q_v})-1 \choose m_{q'_v-1}-1}
-{m'_{q_v-1}+\sigma_{q_v}(m_{q_v})-1 \choose m'_{q_v-1}-1}{m_{q_v-1}+\sigma_{q_v}(m'_{q_v})-1 \choose m_{q_v-1}-1}\right)
\\
& \quad \cdot {\sigma_{q_v}(m'_{q_v})+m'_{q_v+1}-1 \choose \sigma_{q_v}(m'_{q_v})-1}.
\end{align*}
So,
\begin{align*}
D_{\emptyset}&=D(1)+D((m_{q}m'_{q})\\
&=\alpha f_1(m_0,m_1,m'_1)\cdot \prod_{i=2}^{q_v}f_i(m_{i-1},m'_{i-1},m_i,m'_i)\cdot
\prod_{i=1}^{q_v}(m'_i-m_i)^2\\
&\cdot
\frac{1}{(m'_{q_v}-m_{q_v})}\left({m'_{q_v}+m'_{q_v+1}-1 \choose m'_{q_v}-1}-{m_{q_v}+m'_{q_v+1}-1 \choose m_{q_v}-1}\right)
\end{align*}

In the above discussion, if there exists a smallest $i \in [q_v]$ such that
$m_i=m'_i$, then $D(\sigma_i,\ldots,\sigma_{q_v})=0$ and hence
$$D_\emptyset=\sum_{\sigma_i,\ldots,\sigma_{q_v}}D(\sigma_i,\ldots,\sigma_{q_v})=0.$$
So, if $m_i\ne m'_i$ for all $i\in [q_v]$, then $f_i(m_{i-1},m'_{i-1},m_i,m'_i)>0$ for all $i$, implying that
$ D_\emptyset >0.$
We are now arriving at the inequality (\ref{comp}) as by Eq. (\ref{compequiv}) and the decomposition of $\F^\e_{d_1}(P)[u,v,{u}_s]$,
\begin{align*}
&\sum_{0< t_1\le d_1} {{t_1+t_2}-1 \choose t_2} \Tr_{d_1;t_1}(P;[u;v,u_s]) - \sum_{0< t_1\le d_1} {{t_1+t_2}-1 \choose t_2} \Tr_{d_1;t_1}(P;[u;v,u_s])\\
&\quad =\sum_{0 \le q_v \le s-1,\atop s \le q_u \le r}
\sum_{\sum_{i=0}^{q_v} m_i +\sum_{i=1}^{q_u} m'_i=\frac{d_1}{m}}\sum_{\sigma_i \in \mathbb{S}(\{m_i,m'_i\}),i\in [q_v]}D(\sigma_1,\ldots,\sigma_{q_v})\\
& \quad \ge 0.
\end{align*}

By Eqs. (\ref{refl}), (\ref{addi}) and (\ref{comp}),
we proved that Eq. (\ref{Tr_uv}) holds for $\H_1=P$ as
\begin{align*}
&\sum_{0< t_1\le d_1} {{t_1+t_2}-1 \choose t_2} \Tr_{d_1;t_1}(P;[u]) - \sum_{0< t_1\le d_1} {{t_1+t_2}-1 \choose t_2} \Tr_{d_1;t_1}(P;[v])\\
&\quad =\sum_{0< t_1\le d_1} {{t_1+t_2}-1 \choose t_2} \Tr_{d_1;t_1}(P;[u;u_s, \hat{v}])\\
&\quad \quad +\sum_{0< t_1\le d_1} {{t_1+t_2}-1 \choose t_2} \Tr_{d_1;t_1}(P;[u;v,u_s]) - \sum_{0< t_1\le d_1} {{t_1+t_2}-1 \choose t_2} \Tr_{d_1;t_1}(P;[u;v,u_s])\\
& \quad \ge 0.
\end{align*}
If $d_1 \ge sm$, then $\F_{d_1}(P)[u,u_s, \hat{v}] \ne \emptyset $,
which implying that $\Tr_{d_1;t_1}(P;[u;u_s, \hat{v}])>0$ for some positive integer $t_1$.
So, Eq. (\ref{Tr_uv}) holds strictly in this situation.
The result follows by Corollary \ref{Tra_cut_2}.
\end{proof}

\begin{cor}\label{comp_path2}
Let $e=\{u,v, w_1,\ldots,w_{m-2}\}$ be an edge, and let $\H=e(w_1,\ldots,w_p) \odot(\H_1(\tilde{w}_1), \ldots,\H_p(\tilde{w}_p))$, where  $1 \le p \le m-2$.
Let $\H^{e}_{r,s}=\H(u,v) \odot (P^m_r(\tilde{u}),P^m_s(\tilde{v})$, where $\tilde{u},\tilde{v}$ are respectively the pendent vertices of $P_{r}^m$ and $P_s^m$.
If $r \ge s \ge 1$, then $$\Tr_d(\H^{e}_{r,s}) \ge \Tr_d(\H^{e}_{r+1,s-1}),$$
with strict inequality if $d \ge (s+1)m$.
\end{cor}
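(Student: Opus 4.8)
The plan is to reduce the inequality, via Corollary \ref{Tra_cut_2}, to a domination statement of the type \eqref{Tr_uv} for a fixed auxiliary hypergraph, and then to prove that statement by adapting the argument in the proof of Corollary \ref{comp_path}, now with the bundle-bearing edge $e$ in place of the central edge $e_0$ there.

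Set $\mathcal{N}:=\H(u,v)\odot(P^m_r(\tilde u),P^m_{s-1}(\tilde v))$, with the convention that $P^m_0$ is a single vertex (so $\mathcal{N}=\H(u)\odot P^m_r(\tilde u)$ if $s=1$); let $a$ be the pendent spine vertex of the attached copy of $P^m_r$, and $b$ that of the attached copy of $P^m_{s-1}$ (so $b=v$ when $s=1$), and write $P^m_1$ for a single edge with a distinguished vertex $c$. Identifying $c$ with $a$ turns the copy of $P^m_r$ into a copy of $P^m_{r+1}$, while identifying $c$ with $b$ turns the copy of $P^m_{s-1}$ into a copy of $P^m_s$; hence
\[
\H^e_{r,s}=\mathcal{N}(b)\odot P^m_1(c),\qquad \H^e_{r+1,s-1}=\mathcal{N}(a)\odot P^m_1(c).
\]
By Corollary \ref{Tra_cut_2} (with $\H_1=\mathcal{N}$, $\H_2=P^m_1$, and coalescence vertices $b$ resp.\ $a$), it suffices to show that for all positive integers $d_1,t_2$,
\begin{equation}\label{domNb}
\sum_{0<t_1\le d_1}\binom{t_1+t_2-1}{t_2}\,\Tr_{d_1;t_1}(\mathcal{N};[b])\ \ge\ \sum_{0<t_1\le d_1}\binom{t_1+t_2-1}{t_2}\,\Tr_{d_1;t_1}(\mathcal{N};[a]),
\end{equation}
with strict inequality for $d_1\ge(s+1)m$ and a suitable $t_2$, which then yields $\Tr_d(\H^e_{r,s})>\Tr_d(\H^e_{r+1,s-1})$ for $d\ge(s+1)m$.

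To prove \eqref{domNb} I would follow the scheme of the proof of Corollary \ref{comp_path}. First, for any $F\in\F^\e_{d_1}(\mathcal{N})$, the part of $\mathcal{V}(F)$ lying on either attached hyperpath has all edge-multiplicities divisible by $m$ with vertex-roots uniform on each edge: arguing as in Lemma \ref{core} back along the path (every vertex of these hyperpaths except the attachment vertex is cored in $\mathcal{N}$), Corollary \ref{tree_root} applies locally to the hyperpath parts although $\mathcal{N}$ itself need not be a hypertree. Let $\psi$ be the involution fixing $e$ and every $\H_i$ pointwise and interchanging, for $i\le s-1$, the $i$-th edge of the copy of $P^m_r$ with the $i$-th edge of the copy of $P^m_{s-1}$ together with the associated spine vertices (and swapping $u\leftrightarrow v$). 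For those $F$ whose Veblen hypergraph does not reach the $s$-th or later edge of the copy of $P^m_r$, $\psi$ transports $F$ to a weight-preserving $\psi(F)$ with $r_F(b)=r_{\psi(F)}(\psi(b))$, so these $F$ contribute equally to the two sides of \eqref{domNb}; moreover, since $e\cup\H_1\cup\cdots\cup\H_p$ is joined to the two hyperpaths only at the cut vertices $u,v,w_1,\dots,w_p$, Lemma \ref{Tra_cut} makes the whole contribution of $\mathcal{V}(F)\cap(e\cup\H_1\cup\cdots\cup\H_p)$ factor out identically from both sides. What remains is exactly the type of finite alternating sum of products of binomial coefficients along the two hyperpaths, indexed by the edge-multiplicity profile, that is handled in the proof of Corollary \ref{comp_path}: the same telescoping with the positive quantities $f_i(\cdot)$ shows the left side of \eqref{domNb} dominates. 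For strictness, once $d_1\ge(s+1)m$ there is an $F$ counted by the left side whose Veblen hypergraph uses $e$, connects it to $b$ along the entire copy of $P^m_{s-1}$, and in addition uses one edge of some $\H_i$ through $w_i$ (such an edge exists since $\H_i$ is nontrivial and connected); this configuration has no counterpart on the right side, forcing strict inequality. The threshold is $(s+1)m$ rather than the $sm$ of Corollary \ref{comp_path} because $e$ now separates the two hyperpaths.

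The step I expect to be the main obstacle is making precise how the (possibly cyclic) hypergraphs $\H_1,\dots,\H_p$ on $e$ enter the comparison: one must check that the cyclic part of any Veblen hypergraph of $\mathcal{N}$ is confined to $e\cup\H_1\cup\cdots\cup\H_p$, is fixed by $\psi$, and — through the cut-vertex decomposition of Lemma \ref{Tra_cut} at $u,v,w_1,\dots,w_p$ — contributes identically to both sides of \eqref{domNb}, so that all the genuine combinatorial content is isolated in the hypertree-like hyperpath parts, where the computation of Corollary \ref{comp_path} applies essentially verbatim up to relabeling. Verifying that this relabeling produces precisely the threshold $d\ge(s+1)m$ in the statement is the other point that needs care.
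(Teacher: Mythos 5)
Your opening reduction is legitimate and genuinely different from the paper's: writing $\H^{e}_{r,s}=\mathcal{N}(b)\odot P^m_1(c)$ and $\H^{e}_{r+1,s-1}=\mathcal{N}(a)\odot P^m_1(c)$ and invoking Corollary \ref{Tra_cut_2} is a valid set-up (the paper instead keeps the whole hyperpath $P^m_{r+s+1}$ fixed, moves the bundle block between the cored vertices $v_{0,1},\ldots,v_{0,p}$ and $v_{1,1},\ldots,v_{1,p}$ of two \emph{adjacent} edges, derives a multi-attachment analogue of Lemma \ref{Tra_cut} by hand, and reduces to the weighted comparison (\ref{comp2}) on the bare hyperpath). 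The gap is in your proof of the key domination, i.e.\ that $\sum_{t_1}\binom{t_1+t_2-1}{t_2}\Tr_{d_1;t_1}(\mathcal{N};[b])\ge\sum_{t_1}\binom{t_1+t_2-1}{t_2}\Tr_{d_1;t_1}(\mathcal{N};[a])$. Your involution $\psi$ fixes $e$ and swaps the first $s-1$ edges of the two branches, so it carries $b$ to the spine vertex at distance $s-1$ from $u$ on the long branch, which is \emph{not} $a$ unless $r=s-1$, contradicting $r\ge s$. Hence for $F$ with $r_F(b)>0$ the image $\psi(F)$ is rooted positively at $\psi(b)$, not at $a$, and in general contributes nothing to the right-hand side; the asserted cancellation ``these $F$ contribute equally to the two sides'' fails. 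Unlike in Corollaries \ref{comp_path} and \ref{comp_path2}, where the two compared vertices both sit next to the mirror edge/vertex and the reflection really matches the two families, your compared vertices are the two far ends, and no symmetry of $\mathcal{N}$ exchanges them: the decorated edge $e$ lies at distance $s$ from $b$ but $r+1$ from $a$. The natural matching for small Veblen hypergraphs is the isomorphism between terminal segments of the two pendant paths, and what remains after it (bare rootings on the long branch of span at least $s$, together with rootings crossing $e$ and the bundles, compared end-to-end with the hub in an asymmetric position) is \emph{not} the alternating-sum computation of Corollary \ref{comp_path} ``up to relabeling''; it would require its own injection and weight analysis. So the core step of your plan is missing.

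Two further points. First, the factoring-out of the bundle contributions is a multi-attachment statement (the $\H_i$ are glued at the $p$ cored vertices $w_1,\ldots,w_p$ of $e$), so it is not literally Lemma \ref{Tra_cut}; as in the paper's proof, the decomposition with the weights $\prod_i\binom{t_0+t_i-1}{t_0-1}$ has to be re-derived, and in your framework it would have to be carried out inside $\mathcal{N}$ for both reference vertices $a$ and $b$. Second, your strictness argument is incomplete: ``this configuration has no counterpart on the right side'' proves nothing until a weight-preserving pairing of all remaining terms is exhibited, and even granting strict domination at $d_1=(s+1)m$, Corollary \ref{Tra_cut_2} only gives strict trace inequality for $d>d_1$, so the stated threshold would not be recovered on the nose. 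The threshold is in any case delicate: for $m=3$, $r=s=1$, $p=1$ and $\H_1$ a single edge one checks directly that $\Tr_6(\H^{e}_{1,1})=\Tr_6(\H^{e}_{2,0})$, because at $d=(s+1)m$ every contributing Veblen hypergraph is either a single edge or a pair of adjacent edges; strict inequality needs room for both a hyperpath portion of at least $(s+1)m$ edges and a bundle portion of at least $m$ edges, so any careful write-up (yours or the paper's) must treat this boundary case with more caution than your sketch does.
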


\begin{proof}
The proof line is similar to that in Corollary \ref{comp_path}.
Let $P_{r+s+1}^m$ be the hyperpath with some vertices labelled and ordered from left to right as in Fig. \ref{path2}, whose edges are labelled from left to right as
$e_i=\{v_i,v_{i,1},\ldots,v_{i,m-2}, v_{i-1}\}$ for $i \in [s]$, $e'_1=\{v_0,v_{0,1},\ldots,v_{0,m-2},u_0\}$, and $e'_i=\{u_{i-2}, u_{i-2,1},\ldots,u_{i-2,m-2},u_{i-1}\}$ for $i=2,3,\ldots,r+1$.
Then $\H^{e}_{r,s}=P_{r+s+1}^m(v_{0,1},\ldots,v_{0,p}) \odot (\H_1(\tilde{w}_1),\ldots,\H_p(\tilde{w}_p))$, and $\H^{e}_{r+1,s-1}=P_{r+s+1}^m(v_{1,1},\ldots,v_{1,p}) \odot (\H_1(\tilde{w}_1),\ldots,\H_p(\tilde{w}_p))$.

\begin{figure}[h]
\centering
\includegraphics[scale=.8]{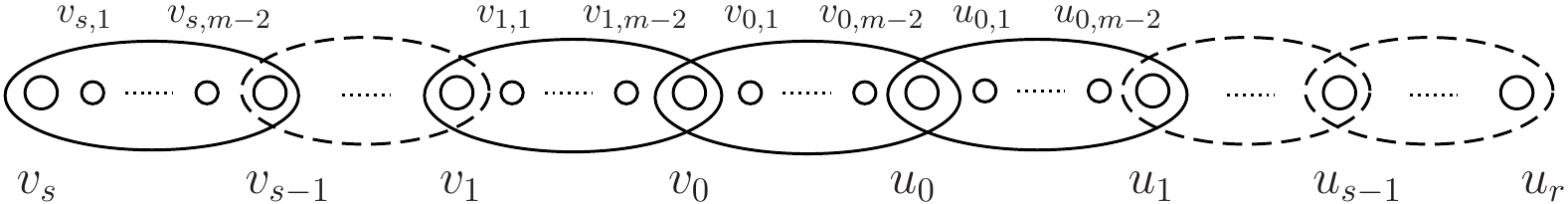}
\caption{The hyperpath $P_{r+s+1}^m$}\label{path2}
\end{figure}

Let $P:=P_{r+s+1}^m$, and
\begin{align*}
\tilde{\F}_d(\H^{e}_{r,s})[v_{0,1}]& =\{F \in \F^\e_d(\H^{e}_{r,s}): r_{F|_P}(v_{0,1})>0, r_{F|_{\H_1}}(\tilde{w}_1)+\cdots+r_{F|_{\H_p}}(\tilde{w}_p)>0\},\\
& =\bigcup_{d_0+d_1+\cdots+d_p=d}\bigcup_{t_i \le d_i, i=0,1,\ldots,p,\atop t_0>0, t_1+\cdots+t_p>0}
\F_{d_0,d_1,\ldots,d_p;t_0,t_1,\ldots,t_p}(\H^{e}_{r,s}),
\end{align*}
where
\begin{align*}
\F_{d_0,d_1,\ldots,d_p;t_0,t_1,\ldots,t_p}(\H^{e}_{r,s})& =\{F \in \tilde{\F}_d(\H^{e}_{r,s})[v_{0,1}]: |F|_P|=d_0, |F|_{\H_i}|=d_i,
r_{F|_P}(v_{0,1})=t_0, r_{F|_{\H_i}}(\tilde{w}_i)=t_i, i \in [p] \}.
\end{align*}
 We note here that if $r_{F|_P}(v_{0,1})=t_0>0$, then  $r_{F|_P}(v_{0,1})=\cdots=r_P(v_{0,p})=t_0$ by Corollary \ref{tree_root} as $\mathcal{V}(F)|_P$, the restriction of the Veblen hypergraph $\mathcal{V}(F)$ onto $P$, is still a Veblen hypergraph with an Euler rooting, and $v_{0,1},\ldots,v_{0,p}$ are cored vertices of $P$ contained in the same edge $e'_1$.
In the definition of $\tilde{\F}_d(\H^{e}_{r,s})[v_{0,1}]$, if replacing $v_{0,1}$ by $v_{1,1}$, we will have the definition of $\tilde{\F}_d(\H^{e}_{r+1,s-1})[v_{1,1}]$.
We get
\begin{equation}\label{tra_diff}
\Tr_d(\H^{e}_{r,s})-\Tr_d(\H^{e}_{r+1,s-1})=\sum_{F \in \tilde{\F}_d(\H^{e}_{r,s})[v_{0,1}]}\omega(F)-
\sum_{F \in \tilde{\F}_d(\H^{e}_{r+1,s-1})[v_{1,1}]}\omega(F).
\end{equation}



Suppose that $\H^{e}_{r,s}$ has $n$ vertices, $P$ has $n_0$ vertices, and $\H_i$ has $n_i$ vertices for $i \in [p]$.
Then $n=n_0+n_1+\cdots+n_p-p$.
By applying a similar discussion as Lemmas \ref{Tra_cut}, we have
\begin{align*}
\sum_{F \in \tilde{\F}_d(\H^{e}_{r,s})[v_{0,1}]}\omega(F)&=
\sum_{d_0+d_1+\cdots+d_p=d}\sum_{t_i \le d_i, i=0,1,\ldots,p,\atop t_0>0, t_1+\cdots+t_p>0}
\sum_{F\in \F_{d_0,d_1,\ldots,d_p;t_0,t_1,\ldots,t_p}(\H^{e}_{r,s})}
d(m-1)^n \frac{\tau(F)}{\prod_{x \in V(F)} d^+_F(x)}\\
&=\sum_{d_0+d_1+\cdots+d_p=d}\sum_{t_i \le d_i, i=0,1,\ldots,p,\atop t_0>0, t_1+\cdots+t_p>0}
\sum_{F\in \F_{d_0,d_1,\ldots,d_p;t_0,t_1,\ldots,t_p}(\H^{e}_{r,s})}
d(m-1)^n \prod_{i \in S_F} \frac{t_0t_i(m-1)}{t_0+t_i}\\
& \quad \cdot
\prod_{i \in S_F} \frac{\tau(F|_{\H_i})}{\prod_{x \in V(F|_{\H_i})} d^+_{F|_{\H_i}}(x)}\cdot \frac{\tau(F|_P)}{\prod_{x \in V(F|_P)}d^+_{F|_P}(x)}\\
&=\sum_{d_0+d_1+\cdots+d_p=d}\sum_{t_i \le d_i, i=0,1,\ldots,p,\atop t_0>0, t_1+\cdots+t_p>0}
\prod_{i \in S_F} \frac{t_i t_0}{d_i(t_0+t_i)} {t_0+t_i \choose t_0} \sum_{F \in \F^\e_{d_i;t_i}(\H_i,[\tilde{w}_i])}\frac{d_i(m-1)^{n_i}\tau(F|_{\H_i})}{\prod_{x \in V(F|_{\H_i})} d^+_{F|_{\H_i}}(v)}\\
& \quad \cdot \frac{d(m-1)^{n-n_0-\sum_{i \in S_F}(n_i-1)}}{d_0} \sum_{F \in \F^\e_{d_0;t_0}(P,[v_{0,1},\ldots, v_{0,p}])}\frac{d_0(m-1)^{n_0}\tau(F|_{P})}{\prod_{x \in V(F|_{P})} d^+_{F|_{P}}(v)}\\
&= \sum_{d_0+d_1+\cdots+d_p=d}\sum_{t_i \le d_i, i \in [p],\atop t_1+\cdots+t_p>0}\prod_{i=1}^p \frac{t_i}{d_i} \Tr_{d_i;t_i}(\H_i;[\tilde{w}_i])
\cdot \frac{d}{d_0}  \sum_{0<t_0\le d_0} \prod_{i=1}^p
{t_0+t_i-1 \choose t_0-1} \cdot \Tr_{d_0;t_0}(P;[v_{0,1}]),
\end{align*}
where $S_F:=\{i: i \in [p], t_i \ne 0\}$, and in the last equality, if $t_i=0$ or equivalently $d_i=0$ we have $\frac{t_i}{d_i}=1$ and $\Tr_{d_i;t_i}(\H_i;[\tilde{w}_i])=(m-1)^{n_i-1}$ in this case so that the equality holds.

Similarly, we have
$$
\sum_{F \in \tilde{\F}_d(\H^{e}_{r+1,s-1})[v_{1,1}]}\omega(F)=
\sum_{d_0+d_1+\cdots+d_p=d}\sum_{t_i \le d_i, i \in [p],\atop  t_1+\cdots+t_p>0}\prod_{i=1}^p \frac{t_i}{d_i} \Tr_{d_i;t_i}(\H_i;[\tilde{w}_i])
\cdot \frac{d}{d_0}  \sum_{0<t_0\le d_0}\prod_{i=1}^p
{t_0+t_i-1 \choose t_0-1} \cdot \Tr_{d_0;t_0}(P;[v_{1,1}]).
$$
So, to prove the difference in (\ref{tra_diff}) is greater than or equal to zero, it suffices to prove that for any given nonnegative intergers $t_1,\ldots,t_p$ with $t_1+\cdots+t_p>0$ and positive integers $d_0$,
\begin{equation}\label{comp2}
\sum_{0 < t_0 \le d_0}\prod_{i=1}^p{t_i+t_0-1 \choose t_i}\cdot \Tr_{d_0;t_0}(P;[v_{0,1}])\ge \sum_{0 < t_0 \le d_0} \prod_{i=1}^p{t_i+t_0-1 \choose t_i}\cdot \Tr_{d_0;t_0}(P;[v_{1,1}]).
\end{equation}

By a similar discussion as in Corollary \ref{comp_path},
define a ``reflection'' $\phi$ on $P$ with respect to the vertex $v_0$, that is,
$$ \phi=\prod_{i=1}^{s-1}(v_i u_{i-1})\cdot  \prod_{i=1}^{m-2}(v_{1,i}v_{0,i}) \cdot \prod_{i=2}^{s} \prod_{j=1}^{ m-2 }(v_{i,j} u_{i-2,j}).$$
We get a map  $\tilde{\phi}$ from $\F^\e_{d_0}(P)[v_{1,1},\hat{u}_s]$ to $\F^\e_{d_0}(P)[v_{0,1},\hat{u}_s]$ induced by $\phi$ and defined as in (\ref{tphi}).
We also find that for each $F \in \F^\e_{d_0}(P)[v_{1,1},\hat{u}_s]$,
$r_F(v_{1,1})=r_{\tilde{\phi}(F)}(v_{0,1})$, and $\tilde{\phi}$ is a bijection between $\F^\e_{d_0;t_0}(P)[v_{1,1};\hat{u}_s]$ and $\F^\e_{d_0;t_0}(P)[v_{0,1};\hat{u}_s]$.
By Eq. (\ref{Shaoeq_Hyp3}), we have
\begin{equation}\label{refl2}
\Tr_{d_0;t_0}(P;[v_{1,1};\hat{u}_s])=\Tr_{d_0;t_0}(P;[v_{0,1};\hat{u}_s]).
\end{equation}
Note that
$\F^\e_{d_0}(P)[v_{1,1},u_s]=\F^\e_{d_0}(P)[v_{1,1},v_{0,1},u_s]$ and
\begin{equation}\label{addi2} \F^\e_{d_0}(P)[v_{0,1},u_s]=\F^\e_{d_0}(P)[v_{0,1},u_s,\hat{v}_{1,1}]\cup \F^\e_{d_0}(P)[v_{0,1},u_s, v_{1,1}].
\end{equation}
By Corollary \ref{tree_root}, $\F^\e_{d_0}(P)[v_{0,1},u_s,\hat{v}_{1,1}] \ne \emptyset $  if $d_0 \ge (s+1)m$,
and $\F^\e_{d_0}(P)[v_{0,1},u_s, v_{1,1}] \ne \emptyset $  if $d_0 \ge (s+2)m$.
We will prove that if $d_0 \ge (s+2)m$,
\begin{equation}\label{comp3}
\sum_{0 < t_0 \le d_0} \prod_{i=1}^p{t_i+t_0-1 \choose t_i}\cdot \Tr_{d_0;t_0}(P;[v_{0,1};v_{1,1},u_s])
 \ge \sum_{0 < t_0 \le d_0} \prod_{i=1}^p{t_i+t_0-1 \choose t_i}\cdot \Tr_{d_0;t_0}(P;[v_{1,1};v_{0,1},u_s]),
 \end{equation}
 or equivalently
 \begin{align*}
&\sum_{F \in \F_{d_0}(P)[v_{0,1},v_{1,1},u_s]}\prod_{i=1}^p {t_i+r_F(v_{0,1})-1 \choose t_i}\cdot \frac{\tau(F)}{\prod_{x \in V(F)} d^+_F(x)} \\
& \quad \ge  \sum_{F \in \F_{d_0}(P)[v_{0,1},v_{1,1},u_s ]}\prod_{i=1}^p{t_i+r_F(v_{1,1}) -1 \choose t_i}\cdot \frac{\tau(F)}{\prod_{x \in V(F)} d^+_F(x)}.
\end{align*}

By Corollary \ref{tree_root}, if $ \F^\e_{d_0}(P)[v_{0,1},v_{1,1},u_s] \ne \emptyset$, then $m \mid d_0$; furthermore, we have a decomposition as
We have a decomposition
$$ \F^\e_{d_0}(P)[v_{0,1},v_{1,1},u_s]=\bigcup_{1\le q_v \le s,\atop s+1 \le q_u \le r+1}
\bigcup_{\sum\limits_{i=1}^{q_v} m_i+ \sum\limits_{i=}^{q_u}m'_i=\frac{d_0}{m}}\bigcup_{\sigma_i \in \mathbb{S}(\{m_i,m'_i\}),\atop
i \in [q_v]}
\F(\sigma_1(m_1,m'_1)\ldots,
\sigma_{q_v}(m_{q_v},m'_{q_v}),m'_{q_v+1},\ldots,m'_{q_u}),$$
where $\F(\sigma_1(m_1,m'_1)\ldots,
\sigma_{q_v}(m_{q_v},m'_{q_v}),m'_{q_v+1},\ldots,m'_{q_u})$ denotes the set of $d_0$-tuples of ordered rooted edges consisting of $e_i$ with multiplicity $m\sigma_i(m_i)$,  $e'_i$ with multiplicity $m\sigma_i(m'_i)$ for $i \in [q_v]$, and edges $e'_{j}$ with multiplicity $mm'_j$ for $j=q_v+1,\ldots,q_u$.

Given $q_v,q_u,m_1,m'_1,\ldots,m_{q_v},m'_{q_v}, m'_{q_v+1},\ldots,m'_{q_u}$, simply denote
$$\F(\sigma_1,\sigma_2,\ldots,\sigma_{q_v}):=\F(\sigma_1(m_1,m'_1)\ldots,
\sigma_{q_v}(m_{q_v},m'_{q_v}),m'_{q_v+1},\ldots,m'_{q_u}),$$
and
 \begin{align*}
&D(\sigma_1,\sigma_2,\ldots,\sigma_{q_v}) \\
&:=
\sum_{F\in \F(\sigma_1,\sigma_2,\ldots,\sigma_{q_v})}
  \left(\prod_{i=1}^p {t_i+r_F(v_{0,1})-1 \choose t_i}
 -\prod_{i=1}^p{t_i+r_F(v_{1,1}) -1 \choose t_i}\right)\frac{\tau(F)}{\prod_{x \in V(F)} d^+_F(x)}
.
\end{align*}
By a calculation as in Corollary \ref{comp_path}, we get
$$r_F(v_{0,p})=\s_1(m'_1), r_F(v_{1,p})=\s_1(m_1),$$
\begin{align*}
    \prod_{x \in V(F)}d_F^+(x)&=\s_{q_v}(m_{q_v})(\s_{q_v}(m'_{q_v})+m'_{q_v+1})\prod_{i=1}^{q_v-1}(\s_i(m_i)+\s_{i+1}(m_{i+1}))(\s_i(m'_i)+\s_{i+1}(m'_{i+1}))\\
    &\quad  \cdot (m_1+m'_1)\prod_{i=q_v+1}^{q_u-1} (m'_i+m'_{i+1}) \cdot m'_{q_u}\left(\prod_{i=1}^{q_v}m_i m'_i \prod_{i=q_v+1}^{q_u} m'_i\right)^{m-2} (m-1)^{(q_u+q_v)(m-1)+1},
\end{align*}
$$\tau(F)=m^{(q_u+q_v)(m-2)}\left(\prod_{i=1}^{q_v}m_i m'_i \prod_{i=q_v+1}^{q_u}m'_i\right)^{m-1}.$$
We also note that $\F(\sigma_1,\sigma_2,\ldots,\sigma_{q_v})$ contains exactly
$$ {(m_1+m'_1 \choose m_1} {\sigma_{q_v}(m'_{q_v})+m'_{q_v+1} \choose \sigma_{q_v}(m'_{q_v})}\prod_{i=1}^{q_{v}-1}{\sigma_i(m_i)+\sigma_{i+1}(m_{i+1}) \choose \sigma_i(m_i)}{\sigma_i(m'_i)+\sigma_{i+1}(m'_{i+1}) \choose \sigma_i(m'_i)} \cdot
 \prod_{i=q_v+1}^{q_u-1} {m'_{i}+m'_{i+1} \choose m'_i}
 $$
$d_0$-tuples of ordered rooted edges, all corresponding the same summand in $D(\s_1,\ldots,\s_{q_v})$.
So we have
\begin{align*}
 D(\s_1,\ldots,\s_{q_v})&=\alpha\left(\prod_{i=1}^p {t_i+\s_1(m'_1)-1 \choose t_i} -  \prod_{i=1}^p {t_i+\s_1(m_1)-1 \choose t_i} \right)\\
  & \quad \cdot {\sigma_{q_v}(m'_{q_v})+m'_{q_v+1}-1 \choose \sigma_{q_v}(m'_{q_v})-1}
 \prod_{i=1}^{q_{v}-1}{\sigma_i(m_i)+\sigma_{i+1}(m_{i+1})-1 \choose \sigma_i(m_i)-1}{\sigma_i(m'_i)+\sigma_{i+1}(m'_{i+1})-1 \choose \sigma_i(m'_i)-1},
 \end{align*}
where
$$ \alpha=\frac{
m^{(q_u+q_v)(m-2)} {m_1+m'_1 \choose m_1} \prod_{i=q_v+1}^{q_u-1}
{m'_{i}+m'_{i+1}-1 \choose m'_i-1}}{(m_1+m'_1)(m-1)^{(q_u+q_v)(m-1)+1}}.$$

For $2 \le i \le q_v$, denote
$$ D(\sigma_i,\ldots,\sigma_{q_v})
=\sum_{\sigma_1,\ldots,\sigma_{i-1}}D(\sigma_1,\sigma_2,\ldots,\sigma_{q_v}),
$$
and
$$ D_{\emptyset}=\sum_{\sigma_1,\ldots,\sigma_{q_v}}
D(\sigma_1,\sigma_2,\ldots,\sigma_{q_v}).
$$
We may assume that $m_i \ne m'_i$ for each $i \in q_v$;
otherwise, $D_\emptyset =0$ as shown in the below.

We have
\begin{align*}
    D(\s_2,\ldots,\s_{q_v})&= \alpha f_1(m_1,m'_1)(m'_1-m_1)\\
    & \quad \cdot
    \left({m_1+\s_2(m_2)-1 \choose m_1-1}
    {m'_1+\s_2(m'_2)-1 \choose m'_1-1}-{m'_1+\s_2(m_2)-1 \choose m'_1-1}
    {m_1+\s_2(m'_2)-1 \choose m_1-1}\right)\\
    &    \quad \cdot  {\sigma_{q_v}(m'_{q_v})+m'_{q_v+1} -1\choose \sigma_{q_v}(m'_{q_v})-1}
    \prod_{i=2}^{q_{v}-1}{\sigma_i(m_i)+\sigma_{i+1}(m_{i+1})-1 \choose \sigma_i(m_i)-1}{\sigma_i(m'_i)+\sigma_{i+1}(m'_{i+1})-1 \choose \sigma_i(m'_i)-1},
\end{align*}
where
$$f_1(m_1,m'_1):=\frac{\prod_{i=1}^p {t_i+m'_1-1 \choose t_i} -  \prod_{i=1}^p {t_i+m_1-1 \choose t_i} }{(m'_1-m_1)}.$$
Denote
$$f_i(m_{i-1},m'_{i-1},m_i,m'_i)=\frac{{m_{i-1}+m_i-1 \choose m_{i-1}-1}{m'_{i-1}+m'_i-1 \choose m'_{i-1}-1}-
{m'_{i-1}+m_i-1 \choose m'_{i-1}-1}{m_{i-1}+m'_i-1 \choose m_{i-1}-1}}{(m'_{i-1}-m_{i-1})(m'_i-m_i)},i=2,\ldots,q_v.$$
 By induction, we have
 \begin{align*}
     D(q_v)&= \alpha f_1(m_1,m'_1)
    \prod_{i=2}^{q_v-1}f_i(m_{i-1},m'_{i-1},m_i,m'_i)
    \prod_{i=1}^{q_v-2}(m'_i-m_i)^2(m'_{q_v-1}-m_{q_v-1})\\
     & \quad \cdot
     \left({m_{q_v-1}+\s_{q_v}(m_{q_v})-1 \choose m_{q_v-1}-1}
    {m'_{q_v-1}+\s_{q_v}(m'_{q_v})-1 \choose m'_{q_v}-1}-{m'_{q_v-1}+\s_{q_v}(m_{q_v})-1 \choose m'_{q_v-1}-1}
    {m_{q_v-1}+\s_{q_v}(m'_{q_v})-1 \choose m_{q_v-1}-1}\right)\\
        & \quad \cdot  {\sigma_{q_v}(m'_{q_v})+m'_{q_v+1}-1 \choose \sigma_{q_v}(m'_{q_v})-1}.
 \end{align*}
So,
\begin{align*}
    D_\emptyset&=D(1)+D((m_{q_v}m'_{q_v}))\\
    &=\alpha f_1(m_1,m'_1)\prod_{i=2}^{q_v}f_i(m_{i-1},m'_{i-1},m_i,m'_i) \cdot \prod_{i=1}^{q_v}(m'_i-m_i)^2\\
    & \cdot \frac{1}{(m'_{q_v}-m_{q_v})}\left({m'_{q_v}+m'_{q_v+1}-1 \choose m'_{q_v}-1}-{m_{q_v}+m'_{q_v+1}-1 \choose m_{q_v}-1}\right).
\end{align*}

In the above discussion, if there exists a smallest $i \in [q_v]$ such that
$m_i=m'_i$, then $D(\sigma_i,\ldots,\sigma_{q_v})=0$ and hence
$$D_\emptyset=\sum_{\sigma_i,\ldots,\sigma_{q_v}}D(\sigma_i,\ldots,\sigma_{q_v})=0.$$
So, if $m_i\ne m'_i$ for all $i$, then $f_i(m_{i-1},m'_{i-1},m_i,m'_i)>0$ for all $i$, implying that
$ D_\emptyset >0.$

We are now arriving at the inequality (\ref{comp3}) as
\begin{align*}
&\sum_{0 < t_0 \le d_0}  \prod_{i=1}^p{t_i+t_0-1 \choose t_i}\cdot \Tr_{d_0;t_0}(P;[v_{0,1};v_{1,1},u_s])  - \sum_{0 < t_0 \le d_0}  \prod_{i=1}^p{t_i+t_0-1 \choose t_i}\cdot \Tr_{d_0;t_0}(P;[v_{1,1};v_{0,1},u_s])\\
&\quad =\sum_{1 \le q_v \le s,\atop s+1 \le q_u \le r+1}
\sum_{\sum_{i=1}^{q_v} m_i+ \sum_{i=}^{q_u}m'_i=\frac{d_0}{m}}\sum_{\sigma_i \in \mathbb{S}(\{m_i,m'_i\}),i\in [q_v]}D(\sigma_1,\ldots,\sigma_{q_v})\\
&\quad  \ge 0.
\end{align*}
By Eqs. (\ref{refl2}), (\ref{addi2}) and (\ref{comp3}),
we proved Eq. (\ref{comp2}) as
\begin{align*}
    & \sum_{0 < t_0 \le d_0} \prod_{i=1}^p{t_i+t_0-1 \choose t_i}\cdot \Tr_{d_0;t_0}(P;[v_{0,1}])- \sum_{0 < t_0 \le d_0} \prod_{i=1}^p{t_i+t_0-1 \choose t_i}\cdot \Tr_{d_0;t_0}(P;[v_{1,1}])\\
    & =\sum_{0 < t_0 \le d_0}  \prod_{i=1}^p{t_i+t_0-1 \choose t_i}\cdot \Tr_{d_0;t_0}(P;[v_{0,1};u_s,\hat{v}_{1,1}])\\
    &\quad + \sum_{0 < t_0 \le d_0} \prod_{i=1}^p{t_i+t_0-1 \choose t_i}\cdot \Tr_{d_0;t_0}(P;[v_{0,1};v_{1,1},u_s])  - \sum_{0 < t_0 \le d_0}  \prod_{i=1}^p{t_i+t_0-1 \choose t_i}\cdot \Tr_{d_0;t_0}(P;[v_{1,1};v_{0,1},u_s])\\
&  \ge 0.
\end{align*}
If $d_0 \ge (s+1)m$, then $\F^\e_{d_0}(P)[v_{0,1},u_s,\hat{v}_{1,1}] \ne \emptyset $,
implying that $\Tr_{d_0;t_0}(P;[v_{0,1};u_s,\hat{v}_{1,1}])>0$ for some positive integer $t_0$.
So, (\ref{comp2}) holds strictly in this situation.
The result follows from Eq. (\ref{tra_diff}).
\end{proof}

\section{Estrada index of hypertrees}
In this section, we will determine the unique hypertree with minimum or maximum Estrada index among all hypertrees with fixed number of edges.
By Corollaries \ref{comp_path},  \ref{comp_path2} and Eq. (\ref{EE_tr}), we easily get the following lemmas.

\begin{lem}\label{Est_path}
Let $\H_{r,s}$ be defined as in Corollary \ref{comp_path}.
If $r \ge s \ge 1$, then
$$EE(\H_{r,s})> EE(\H_{r+1,s-1}).$$
\end{lem}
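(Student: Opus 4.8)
The plan is to deduce the strict inequality for the Estrada index directly from the trace comparison in Corollary~\ref{comp_path} together with the series representation in Eq.~(\ref{EE_tr}). First I would recall that $EE(\H_{r,s})=\sum_{d=0}^\infty \Tr_d(\H_{r,s})/d!$ and likewise for $\H_{r+1,s-1}$, so that
\[
EE(\H_{r,s})-EE(\H_{r+1,s-1})=\sum_{d=0}^\infty \frac{\Tr_d(\H_{r,s})-\Tr_d(\H_{r+1,s-1})}{d!}.
\]
Corollary~\ref{comp_path} says that every summand on the right is nonnegative, since $\Tr_d(\H_{r,s})\ge \Tr_d(\H_{r+1,s-1})$ for every $d$ (the inequality being trivial for $d=0$, where both traces equal $\Tr_{0;0}$ of the same number of vertices, and for small $d$ where neither hypergraph has enough edges to contribute a nonzero Veblen configuration crossing the cut vertex). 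Hence the difference of Estrada indices is a sum of nonnegative terms.

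To upgrade ``$\ge$'' to ``$>$'', I would invoke the strict part of Corollary~\ref{comp_path}: for every $d\ge sm$ we have $\Tr_d(\H_{r,s})>\Tr_d(\H_{r+1,s-1})$. Since $s\ge 1$, the index $d=sm$ is a genuine positive integer, so at least one term of the series — in fact infinitely many — is strictly positive, and therefore the whole sum is strictly positive. This gives $EE(\H_{r,s})>EE(\H_{r+1,s-1})$, as claimed. One should note in passing that the series for $EE$ converges absolutely, so rearranging and term-by-term comparison are legitimate; this is immediate because all $\Tr_d$ are bounded by $N\rho^d$ where $\rho$ is the spectral radius, $N$ the number of eigenvalues.

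There is essentially no obstacle here: the entire content has been front-loaded into Corollary~\ref{comp_path}, and Lemma~\ref{Est_path} is the routine transfer from the trace level to the Estrada-index level via Eq.~(\ref{EE_tr}). The only minor point to be careful about is to confirm that $\H_{r,s}$ and $\H_{r+1,s-1}$ have the same number of vertices (they are both obtained from $\H$ by attaching $P_{r+s}^m$, just rooted at different vertices $u$ versus $v$), so that the $d=0$ terms cancel exactly and do not spuriously contribute; this is clear from the construction of $\H_{r,s}=\H(w)\odot P_{r+s}^m(u)$ and $\H_{r+1,s-1}=\H(w)\odot P_{r+s}^m(v)$ in the proof of Corollary~\ref{comp_path}.
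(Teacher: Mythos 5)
Your argument is correct and is exactly the paper's route: the paper derives Lemma \ref{Est_path} directly from Corollary \ref{comp_path} and the series expansion (\ref{EE_tr}), with the strict inequality at (infinitely many) $d\ge sm$ forcing strictness of the Estrada-index comparison. Your added remarks on absolute convergence and the cancellation of the $d=0$ terms are harmless elaborations of the same proof.
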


\begin{lem}\label{Est_path2}
Let $\H^e_{r,s}$ be defined as in Corollary \ref{comp_path2}.
If $r \ge s \ge 1$, then
$$EE(\H^e_{r,s})> EE(\H^e_{r+1,s-1}).$$
\end{lem}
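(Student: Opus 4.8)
The plan is to deduce the strict inequality $EE(\H^e_{r,s}) > EE(\H^e_{r+1,s-1})$ directly from Corollary \ref{comp_path2} together with the series expansion \eqref{EE_tr}. By Corollary \ref{comp_path2}, since $r \ge s \ge 1$ we have $\Tr_d(\H^e_{r,s}) \ge \Tr_d(\H^e_{r+1,s-1})$ for every $d$, with strict inequality whenever $d \ge (s+1)m$. First I would note that $\H^e_{r,s}$ and $\H^e_{r+1,s-1}$ have the same number of vertices, say $n$, so that $\Tr_0(\H^e_{r,s}) = \Tr_0(\H^e_{r+1,s-1}) = (m-1)^{n-1}$; hence the $d=0$ terms in \eqref{EE_tr} agree and cause no trouble. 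For each $d$ the contribution $\Tr_d(\H^e_{r,s})/d!$ is at least $\Tr_d(\H^e_{r+1,s-1})/d!$, and for $d \ge (s+1)m$ (such $d$ exist, e.g.\ $d=(s+1)m$) the contribution is strictly larger.

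The one subtlety is convergence: \eqref{EE_tr} is an infinite sum, so to conclude $EE(\H^e_{r,s}) > EE(\H^e_{r+1,s-1})$ I must be sure both series converge and that a strict inequality in finitely many (indeed infinitely many) terms, all others being weak inequalities, passes to the sum. Convergence is immediate from Definition \ref{defE}: $EE$ equals a finite sum $\sum_{i=1}^N e^{\lambda_i}$ of exponentials of the (finitely many) eigenvalues, so it is a well-defined finite real number, and likewise the tail of \eqref{EE_tr} is absolutely convergent. Thus I would write
\begin{equation*}
EE(\H^e_{r,s}) - EE(\H^e_{r+1,s-1}) = \sum_{d=0}^\infty \frac{\Tr_d(\H^e_{r,s}) - \Tr_d(\H^e_{r+1,s-1})}{d!},
\end{equation*}
observe that every summand on the right is nonnegative by Corollary \ref{comp_path2}, and that the summand indexed by $d = (s+1)m$ is strictly positive; since a convergent sum of nonnegative terms with at least one strictly positive term is strictly positive, the difference is $> 0$.

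I do not expect any real obstacle here: the entire content has been pushed into Corollary \ref{comp_path2}, and what remains is the routine term-by-term comparison of the two exponential generating series, the only point requiring a word being the legitimacy of summing the inequalities term by term, which is justified by absolute convergence. The same argument proves Lemma \ref{Est_path} verbatim, replacing Corollary \ref{comp_path2} by Corollary \ref{comp_path} and the threshold $(s+1)m$ by $sm$.
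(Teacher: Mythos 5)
Your proposal is correct and is essentially the paper's own argument: the paper likewise deduces the strict inequality for the Estrada index directly from Corollary \ref{comp_path2} together with the expansion \eqref{EE_tr}, the term-by-term comparison and convergence being routine. The extra care you take with convergence and with the equal $d=0$ terms (both hypergraphs having the same number of vertices) is fine and does not change the route.
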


\begin{lem}\label{comp_cutedge}
Let $e=\{u,v_1, \ldots,v_{m-1}\}$ be an edge, and let $\H_1=e(v_1,\ldots,v_p) \odot(\tilde{\H}_1(\tilde{v}_1), \ldots,\tilde{\H}_p(\tilde{v}_p))$, where  $1 \le p \le m-1$.
Then
$$\Tr_d(\H_1(v_1)\odot \H_2(w)) \ge \Tr_d(\H_1(u)\odot \H_2(w)),$$
with strict inequality if $d$ is a multiple of $m$, and hence
$$EE(\H_1(v_1)\odot \H_2(w)) > EE(\H_1(u)\odot \H_2(w)).$$

\end{lem}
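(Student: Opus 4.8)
The plan is to go through Corollary~\ref{Tra_cut_2}, applied with the roles of its two distinguished vertices swapped: take the ``$u$'' of that corollary to be our $v_1$ and its ``$v$'' to be our $u$ (keeping $\H_2$ coalesced at $w$). Then it suffices to verify the hypothesis~(\ref{Tr_uv}) of that corollary for the piece $\H_1$, i.e.\ that for all positive integers $d_1,t_2$,
\[
\sum_{0<t_1\le d_1}\binom{t_1+t_2-1}{t_2}\Tr_{d_1;t_1}(\H_1;[v_1])\ \ge\ \sum_{0<t_1\le d_1}\binom{t_1+t_2-1}{t_2}\Tr_{d_1;t_1}(\H_1;[u]),
\]
equivalently, in the Euler-rooting form~(\ref{Tr_uv_an}),
\[
\sum_{F\in\F^\e_{d_1}(\H_1)[v_1]}\binom{r_F(v_1)+t_2-1}{t_2}\frac{\tau(F)}{\prod_{x\in V(F)}d_F^+(x)}\ \ge\ \sum_{F\in\F^\e_{d_1}(\H_1)[u]}\binom{r_F(u)+t_2-1}{t_2}\frac{\tau(F)}{\prod_{x\in V(F)}d_F^+(x)}.
\]
The idea is that the two sums run over nearly the same rootings, but $v_1$ always ``does at least as much as'' $u$, because $u$ is a cored vertex of $e$ in $\H_1$ while $v_1$ additionally carries the attachment $\tilde{\H}_1$.

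The crux is the structural claim that $\F^\e_{d_1}(\H_1)[u]\subseteq\F^\e_{d_1}(\H_1)[v_1]$ and that $r_F(v_1)\ge r_F(u)$ for every $F$ in the left-hand set. Fix such an $F$ and set $H=\mathcal{V}(F)$. Since $u$ lies only in $e$ inside $\H_1$, it lies only in copies of $e$ inside $H$, so with $k:=r_F(u)>0$ Lemma~\ref{core} gives that $e$ occurs exactly $km$ times in $H$ and that $u$ roots it exactly $k$ times. Now do an in-/out-degree count at $v_1$ in the directed graph $R(F)$: let $a$ be the number of copies of $e$ rooted at $v_1$, let $c$ be the number of copies of edges of $\tilde{\H}_1$ in $H$ that contain $v_1$, and let $b'$ be how many of those are rooted at $v_1$. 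The vertex $v_1$ lies in all $km$ copies of $e$ and in $c$ copies of $\tilde{\H}_1$-edges, so its degree in the Veblen hypergraph $H$ is $km+c$, and $m$-valence forces $c\equiv 0\pmod m$, say $c=jm$ with $j\ge 0$. The out-degree of $v_1$ in $R(F)$ is $(a+b')(m-1)$ and its in-degree is $(km-a)+(c-b')$; since $R(F)$ is Eulerian these are equal, which simplifies to $a+b'=k+j$, i.e.\ $r_F(v_1)=k+j\ge k=r_F(u)$. In particular $r_F(v_1)\ge 1$, so $F\in\F^\e_{d_1}(\H_1)[v_1]$; moreover $j\ge 1$ as soon as $H$ contains any edge of $\tilde{\H}_1$, since connectedness of $H$ forces such an edge to reach $e$ through the cut vertex $v_1$. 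As $\tau(F)/\prod_x d_F^+(x)$ is a function of $F$ alone and $\binom{n+t_2-1}{t_2}$ is nondecreasing in $n$, the second displayed inequality holds termwise over the common index set $\F^\e_{d_1}(\H_1)[u]$, with the left-hand sum possibly enlarged by the nonnegative terms indexed by $\F^\e_{d_1}(\H_1)[v_1]\setminus\F^\e_{d_1}(\H_1)[u]$. Hence~(\ref{Tr_uv}) holds for all $d_1$, and Corollary~\ref{Tra_cut_2} yields $\Tr_d(\H_1(v_1)\odot\H_2(w))\ge\Tr_d(\H_1(u)\odot\H_2(w))$ for every $d$ (use $d_1=1$ for $d\ge 2$; for $d=0,1$ both sides coincide, as the two hypergraphs have the same number of vertices and $\Tr_1$ vanishes).

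For the strict part I would exhibit a witness with $d_1=m$, $t_2=1$: since $\tilde{\H}_1$ is nontrivial and connected, $\tilde v_1$ lies in some edge $f$ of $\tilde{\H}_1$, and by Corollary~\ref{tree_root} the multiset $f^{(m)}$ has a unique Euler rooting $F_0$; then $F_0\in\F^\e_m(\H_1)[v_1]\setminus\F^\e_m(\H_1)[u]$ because $u\notin f$, so~(\ref{Tr_uv}) is strict for this $(d_1,t_2)$ and Corollary~\ref{Tra_cut_2} gives $\Tr_d(\H_1(v_1)\odot\H_2(w))>\Tr_d(\H_1(u)\odot\H_2(w))$ for all $d>m$ — in particular for $d=2m$ and every larger multiple of $m$ (for $d=m$ the two traces actually coincide, since every Veblen sub-multi-hypergraph with $m$ edges is an $m$-fold single edge and the two hypergraphs have equal edge and vertex counts, so strictness is available exactly for the multiples of $m$ exceeding $m$, which is what is needed). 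Combining the nonstrict inequality for all $d\ge 0$ with strictness at $d=2m$ and invoking Eq.~(\ref{EE_tr}) gives $EE(\H_1(v_1)\odot\H_2(w))>EE(\H_1(u)\odot\H_2(w))$. The main obstacle is the structural claim — specifically getting the Eulerian balance at $v_1$ right, the key input being that the degree contributed to $v_1$ by $\tilde{\H}_1$-edges in any Veblen sub-multi-hypergraph is automatically a multiple of $m$; once $r_F(v_1)=r_F(u)+j$ is established, monotonicity of the binomial weight, the strict witness, and the passage to $EE$ are all routine.
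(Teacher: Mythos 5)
Your proposal is correct and follows essentially the same route as the paper: reduce to Corollary \ref{Tra_cut_2} by verifying inequality (\ref{Tr_uv_an}) for $\H_1$ with the pair $(v_1,u)$, using the inclusion $\F^\e_{d_1}(\H_1)[u]\subseteq\F^\e_{d_1}(\H_1)[v_1]$ together with $r_F(u)\le r_F(v_1)$ and monotonicity of the binomial weight, and then producing an extra rooting supported on $\tilde{\H}_1$ for strictness. The only differences are cosmetic: you establish $r_F(v_1)=r_F(u)+j$ by an explicit Eulerian in-/out-degree balance at $v_1$ where the paper cites Corollary \ref{tree_root} applied to $\mathcal{V}(F)|_e$, and you correctly note that at $d=m$ the two traces coincide, so strictness should be asserted only for larger (multiples of) $d$, which still suffices for the Estrada index conclusion.
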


\begin{proof}
We will prove that for all positive integers $d_1,t_2$,
\begin{equation}\label{Tr_uv_3}
\sum_{F \in \F^\e_{d_1}(\H_1)[v_1]} {{r_F(v_1)+t_2-1} \choose t_2} \frac{\tau(F)}{\prod_{x \in V(F)}d_F^+(x)} \ge \sum_{F \in \F^\e_{d_1}(\H_1)[u]} {{r_F(u)+t_2-1} \choose t_2} \frac{\tau(F)}{\prod_{x \in V(F)}d_F^+(x)}.
\end{equation}
Note that $\F_{d_1}(\H_1)[u]  \subseteq \F_{d_1}(\H_1)[v_1]$ as $u$ is a cored vertex of $\H_1$.
For each $F \in \F_{d_1}(\H_1)[u]$, the Veblen hypergraph $\mathcal{V}(F)|_e$ is still a
Veblen hypergraph, implying that $u$ acts as a root of $e$ in the same times as $v_1$ by Corollary \ref{tree_root}.
So
$ r_F(u) \le r_F(v_1)$.
Thus the inequality (\ref{Tr_uv_3}) holds.
If $d$ is a multiple of $m$, then $\F_{d_1}(\H_1)[v_1] \backslash \F_{d_1}(\H_1)[u]  \ne \emptyset$ as $v_1$ is also contained in the edges of $\tilde{\H}_1$ except $e$.
So the inequality (\ref{Tr_uv_3}) holds strictly in this case.
The result follows by Corollary \ref{Tra_cut_2}.
\end{proof}

\begin{thm}
Let $T$ be a hypertree with $z$ edges.
Then
$$ EE(P_z^{m}) \le EE(T) \le EE(S_z^m),$$
with left equality if and only if $T=P_z^{m}$ and right equality if and only if $T=S_z^m$.
\end{thm}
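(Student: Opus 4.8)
The plan is to prove the two extremal bounds separately by reducing an arbitrary hypertree to the hyperpath (for the lower bound) and to the hyperstar (for the upper bound) through a sequence of local perturbations, each of which strictly increases (respectively decreases) the Estrada index, using the three perturbation lemmas already established. By Eq.~(\ref{EE_tr}) it suffices throughout to compare traces $\Tr_d(\cdot)$ for all $d$, with at least one strict inequality for some $d$; this is exactly what Lemmas~\ref{Est_path}, \ref{Est_path2} and \ref{comp_cutedge} deliver.

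For the \emph{upper bound} $EE(T)\le EE(S_z^m)$, I would argue that the hyperstar maximizes $EE$ by a ``moving toward a center'' argument. Fix a hypertree $T$ that is not the hyperstar. Then $T$ has a hyperpath of length at least $2$, so there is an edge $e$ with a non-cored vertex $v$ at positive distance from a chosen center, and a branch hanging off $e$ at a cored-type vertex further from the center; more precisely, one can always locate a cut vertex configuration $T=\H_1(v_1)\odot\H_2(w)$ as in Lemma~\ref{comp_cutedge}, where $\H_1$ is a pendent edge $e$ with some branches attached at cored positions $v_1,\ldots,v_p$ and $u$ is the ``outer'' vertex of $e$. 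Applying Lemma~\ref{comp_cutedge} strictly increases $EE$ while reducing the distance of the attached branches to the center (equivalently reducing a suitable potential such as $\sum_{e\in E(T)}(\text{dist from }e\text{ to center})$). Iterating, the process terminates only at $S_z^m$, where all edges meet a common vertex; since each step strictly increases $EE$, we get $EE(T)<EE(S_z^m)$ unless $T=S_z^m$.

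For the \emph{lower bound} $EE(P_z^m)\le EE(T)$, I would run the reverse: show the hyperpath minimizes $EE$ by repeatedly ``straightening out'' branchings. If $T\ne P_z^m$, some vertex has at least two pendent hyperpaths attached (or, after peeling, a vertex or an edge with two hyperpath branches of lengths $r\ge s\ge 1$). In the first case apply Lemma~\ref{Est_path} to $\H_{r,s}\to\H_{r+1,s-1}$; in the second (branches attached at cored vertices of a common edge) apply Lemma~\ref{Est_path2} to $\H^e_{r,s}\to\H^e_{r+1,s-1}$. Each such move strictly decreases $EE$ and strictly decreases the number of vertices of degree $\ge 3$ (weighted, say, by $\sum_v \binom{\deg v}{2}$, or simply: it merges two branches into one longer one while never creating new branch vertices). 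When no branch vertex remains, $T$ has become $P_z^m$; strict monotonicity gives $EE(T)>EE(P_z^m)$ unless $T=P_z^m$.

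The main obstacle is not any single inequality — those are handed to us by the perturbation lemmas — but organizing the induction/termination argument cleanly: one must verify that an arbitrary hypertree genuinely admits, at every stage until it reaches the extremal shape, a local configuration matching the precise hypotheses of the relevant lemma (a pendent edge with branches at cored vertices for Lemma~\ref{comp_cutedge}; two hyperpath branches meeting at a vertex or at cored vertices of one edge for Lemmas~\ref{Est_path}/\ref{Est_path2}), and that a well-chosen monotone potential strictly decreases at each step so the process halts precisely at $P_z^m$ or $S_z^m$. I would isolate this as a short combinatorial lemma about hypertrees (every non-hyperpath hypertree has such a ``two-branch'' spot; every non-hyperstar hypertree has such a ``pushable pendant'' spot) and then the theorem follows by a finite induction on $z$ together with Eq.~(\ref{EE_tr}).
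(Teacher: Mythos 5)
Your proposal is correct and follows essentially the same route as the paper: both bounds rest on Lemmas \ref{Est_path}, \ref{Est_path2} (to straighten two hyperpath branches meeting at a vertex or at two vertices of a common edge) and Lemma \ref{comp_cutedge} (to pull an attached subtree onto a branching vertex), together with the structural observation that every non-extremal hypertree contains the local configuration required by the relevant lemma. The only organizational difference is that the paper applies a single such move to a hypertree assumed to be extremal and derives an immediate contradiction, so the iteration, monotone potential and termination bookkeeping you single out as the main obstacle are not actually needed.
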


\begin{proof}
Let $T_0$ be a hypertree with minimum Estrada index among all hypertrees with $z$ edges.
If $T_0$  is not  a hyperpath, then $\T_0$ can be obtained from a sub-hypertree $T_1$ of $T_0$ by attaching two paths $P_r^m, P_s^m$ with their pendent vertices identified with $u,v$ of $T_1$ respectively, where $u,v$ belong to the same edge say $e$ of $T_1$, and $r \ge s \ge 1$.
If $u=v$, then $T_0$ can be written as $T_0=:\H_{r,s}=T_1(u) \odot ( P_{r}^m(u), P_s^m(u))$.
By Lemma \ref{Est_path}, we have $$EE(T_0)=EE(\H_{r,s})>EE(\H_{r+1,s-1}),$$
which yields a contradiction to the definition of $T_0$.
If $u \ne v$, then $T_0$ can be written as $T_0=:\H^e_{r,s}=T_1(u,v) \odot (P_{r}^m(u),P_s^m(v))$.
By Lemma \ref{Est_path2}, we have $$EE(T_0)=EE(\H^e_{r,s})>EE(\H^e_{r+1,s-1}),$$
which also yields a contradiction.
So $T_0=P_z^m$.

Next let $T'_0$ be a hypertree with maximum Estrada index among all hypertrees with $z$ edges.
Suppose that $T'_0$ is not a hyperstar.
Let $u$ a vertex of $T'_0$ with maximum degree $\Delta$.
Note that $\Delta \ge 2$.
Then $T'_0$ is obtained from the hyperstar $S_\Delta^m$ centered at $u$ by attaching some sub-hypertrees to the vertices of  $S_\Delta^m$ except the vertex $u$.
So $T'_0$ can be written as $T'_0=T'_1(v) \odot \T'_2(v)$, where $T'_1$ contains $S_\Delta^m$ as a sub-hypertree, and $v, u$ belong to the same edge of $S_\Delta^m$.
By Lemma \ref{comp_cutedge}, we have
$$EE(T'_0)=EE(T'_1(v) \odot T'_2(v)) < EE(T'_1(u) \odot T'_2(v)),$$
which  yields a contradiction to the definition of $T'_0$.
So $\T_1=S_z^m$.
\end{proof}

\end{document}